\documentclass[english,11pt,reqno]{amsart}

\usepackage{xcolor}
\usepackage{mathptmx}
\usepackage{amsthm}
\usepackage{aliascnt}
\usepackage{amsmath,amsfonts,amssymb}
\usepackage{mathtools}
\usepackage{geometry}
\usepackage{hyperref}

\hypersetup{
  colorlinks,
  linkcolor={red!50!black},
  citecolor={blue!62!black},
  urlcolor={blue!80!black},
  pdftitle={Some Examples and Counterexamples in Oka Theory},
  pdfauthor={Yun-Heng Du},
  pdfsubject={Examples and counterexamples concerning Oka properties}
}
\theoremstyle{plain}

\newaliascnt{theorem}{thm}
\newtheorem{theorem}[theorem]{Theorem}
\aliascntresetthe{theorem}
\newaliascnt{corollary}{thm}
\newtheorem{corollary}[corollary]{Corollary}
\aliascntresetthe{corollary}
\newaliascnt{lemma}{thm}
\newtheorem{lemma}[lemma]{Lemma}
\aliascntresetthe{lemma}
\newaliascnt{proposition}{thm}
\newtheorem{proposition}[proposition]{Proposition}
\aliascntresetthe{proposition}
\theoremstyle{definition}
\newaliascnt{definition}{thm}
\newtheorem{definition}[definition]{Definition}
\aliascntresetthe{definition}
\numberwithin{equation}{section}

\usepackage[hyperpageref]{backref}

\usepackage[nameinlink,noabbrev]{cleveref}
\crefname{thm}{Theorem}{Theorems}
\Crefname{thm}{Theorem}{Theorems}
\crefname{theorem}{Theorem}{Theorems}
\Crefname{theorem}{Theorem}{Theorems}
\crefname{lemma}{Lemma}{Lemmas}
\Crefname{lemma}{Lemma}{Lemmas}
\crefname{proposition}{Proposition}{Propositions}
\Crefname{proposition}{Proposition}{Propositions}
\crefname{corollary}{Corollary}{Corollaries}
\Crefname{corollary}{Corollary}{Corollaries}
\crefname{definition}{Definition}{Definitions}
\Crefname{definition}{Definition}{Definitions}
\crefname{section}{Section}{Sections}
\Crefname{section}{Section}{Sections}

\allowdisplaybreaks[4]
\newcommand{\A}{\mathbb A}
\newcommand{\B}{\mathbb B}
\newcommand{\C}{\mathbb C}
\newcommand{\D}{\mathbb D}

\newcommand{\Pj}{\mathbb P}
\newcommand{\Q}{\mathbb Q}
\newcommand{\R}{\mathbb R}
\newcommand{\Z}{\mathbb Z}
\newcommand{\Cstar}{\C^*}
\newcommand{\norm}[1]{\left\lVert #1\right\rVert}
\newcommand{\abs}[1]{\left\lvert #1\right\rvert}
\DeclareMathOperator{\Bl}{Bl}
\DeclareMathOperator{\Crit}{Crit}
\DeclareMathOperator{\Ima}{Im}
\DeclareMathOperator{\Jac}{Jac}
\DeclareMathOperator{\id}{id}
\DeclareMathOperator{\rank}{rank}

\title{Some Examples and Counterexamples in Oka Theory}
\author{Yun--Heng Du}
\date{}

\subjclass[2020]{Primary 32Q56; Secondary 32E20, 32E30, 32H02,
14F45, 14R10, 32S45, 55N10}

\keywords{Oka manifold, polynomially convex set, Hartogs triangle,
blow-up, algebraic basic Oka property, weight filtration}

\begin{document}

\begin{abstract}
This paper gives examples and counterexamples in Oka theory: two about deleting closed
sets from complex Euclidean space, one about blowing up along a connected Oka
center, and one about deforming continuous maps to regular maps. Two positive
results show that $\C^3\setminus S$ is Oka for every closed set
$S\subset\R^3$, and that the complement of the closed Hartogs triangle
in $\C^2$ is Oka. In contrast, for every $n\ge3$ there is a proper holomorphic embedding
$\C\hookrightarrow\C^n$ whose image $A$ is a closed connected smooth curve
biholomorphic to $\C$, but whose blow-up $\Bl_A\C^n$ is Brody volume
hyperbolic and hence not Oka. Finally, for every $n\ge2$, there exist a smooth
connected affine algebraic variety $X$
and a continuous map $X\to\C^n\setminus\{0\}$ that is not homotopic to any
regular map $X\to\C^n\setminus\{0\}$; equivalently, $\C^n\setminus\{0\}$ fails
the algebraic basic Oka property (aBOP).
\end{abstract}

\maketitle

\section{Introduction}

Let $Y$ be a complex manifold. A map $f\colon K\to Y$ from a compact set
$K\subset\C^m$ is called holomorphic if it extends holomorphically to a
neighborhood of $K$. The Oka property will be formulated through the convex
approximation property.

\begin{definition}\cite{Forstneric2006Runge}
The manifold $Y$ is \emph{Oka} if,
for every integer $m\ge1$, every nonempty
compact convex set $K\subset\C^m$, and every map $f\colon K\to Y$
holomorphic on $K$, there are entire maps $F_j\colon\C^m\to Y$ converging
uniformly to $f$ on $K$.
\end{definition}

For a systematic account of Oka manifolds, see
\cite[Chapter~5]{ForstnericBook2011}.
Uniform convergence on $K$ is independent of the choice of a distance
inducing the topology of $Y$, and the Oka property is invariant under
biholomorphisms.

This paper gives examples and counterexamples in Oka theory. Two
concern complements of specified closed
sets in complex Euclidean space; the third concerns the standard analytic
blow-up of a complex manifold along a connected closed Oka submanifold; the
fourth concerns replacing a continuous map
from a smooth affine algebraic variety to a space of the form
$\C^n\setminus\{0\}$, where $n\ge2$, by a regular map in its homotopy class.

\par\vspace{0.5\baselineskip}
\noindent\textbf{Closed subsets of $\R^3$.}
The closed-set form of Kusakabe's complement theorem used below is formulated
in terms of polynomial convexity, so we first fix the relevant convention.

\begin{definition}\label{def:polynomial-convexity}
For a compact set $K\subset\C^n$, its polynomial hull is
$$
  \widehat K
  =\left\{z\in\C^n:
    \abs{P(z)}\le\sup_{x\in K}\abs{P(x)}
    \text{ for every }P\in\C[z_1,\ldots,z_n]\right\}.
$$
By convention, $\widehat\varnothing=\varnothing$, and $K$ is called
\emph{polynomially convex} when $\widehat K=K$. For a closed set
$E\subset\C^n$, a
\emph{compact exhaustion} is an increasing sequence $\{E_j\}_{j\ge1}$ of
compact subsets whose union is $E$ and such that every compact subset of
$E$ is contained in some $E_j$. In this setting, set
$$
  \widehat E=\bigcup_{j\ge1}\widehat{E_j},
$$
and call $E$ polynomially convex when $\widehat E=E$.
\end{definition}

This is the convention in Kusakabe's theorem used below. It is independent of the
exhaustion: if $\{E_j\}$
and $\{F_j\}$ are two compact exhaustions, then every $E_j$ is contained in
some $F_k$, whence $\widehat{E_j}\subset\widehat{F_k}$, and the reverse
inclusion follows after interchanging the two exhaustions.

Kusakabe proved that certain complements of closed polynomially convex sets
are Oka and applied this theorem to closed subsets of the standard real
subspaces $\R^k\subset\C^n$. His result
\cite[Theorem~1.6 and Corollary~1.7]{Kusakabe2024} covered all pairs $(n,k)$ except
$$
  (n,k)=(2,1),\qquad(2,2),\qquad(3,3).
$$
Forstneri\v c and
Forn\ae ss Wold settled $(2,1)$
\cite[Proposition~4.9]{ForstnericWold2024} and recorded that $(2,2)$ and $(3,3)$
remained open \cite[Remark~4.10]{ForstnericWold2024}. We settle the case
$(3,3)$.

\begin{theorem}\label{thm:r3-complement}
If $S\subset\R^3$ is closed in the relative topology, then
$\C^3\setminus S$ is an Oka manifold.
\end{theorem}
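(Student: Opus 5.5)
The plan is to reduce \Cref{thm:r3-complement} to Kusakabe's closed-set complement theorem \cite[Theorem~1.6]{Kusakabe2024} combined with Kusakabe's localization principle for Oka manifolds: a complex manifold that is a union of Zariski open Oka subsets is Oka. The obstruction in the case $(n,k)=(3,3)$ is that $\R^3$ fills the whole real dimension count of $\C^3$. The extra hypothesis in the closed-set theorem, which is available for $\R^k\subset\C^n$ when $k<n$, therefore fails for a general closed $S\subset\R^3$. I would try to restore it by adjoining a complex hypersurface to $S$. This costs nothing for the Oka property, because removing a hypersurface yields a Zariski open subset, and localization then assembles the pieces.

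\textbf{Step 1 (covering).} For $c\in\C$, put $H_c=\{z_1=c\}$ and $E_c=S\cup H_c$. Each $E_c$ is closed, and
$$
  \C^3\setminus E_c=(\C^3\setminus S)\setminus H_c
$$
is Zariski open in $\C^3\setminus S$. Every point of $\C^3\setminus S$ lies outside some $H_c$, so these sets cover $\C^3\setminus S$. By localization, it suffices to show that each $\C^3\setminus E_c$ is Oka. If one chart family turns out to be insufficient, I would also use the analogous hypersurfaces $\{z_2=c\}$ and $\{z_3=c\}$, or a generic complex affine hyperplane $\{\ell=c\}$.

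\textbf{Step 2 (polynomial convexity of $E_c$).} Take the compact exhaustion $E_{c,j}=(S\cap\overline{\B}_j)\cup(H_c\cap\overline{\B}_j)$. Compact subsets of $\R^3$ are polynomially convex, and so are closed balls in $H_c$. I would show that their union is polynomially convex using Kallin's lemma with the polynomial $p=z_1-c$. This polynomial maps the disc piece to $\{0\}$ and the real piece into $\R-c$, and both image sets are polynomially convex in $\C$. When $c\notin\R$ the images are disjoint, and Kallin's lemma applies directly. When $c\in\R$ the images meet only at $0$, and the fibre over $0$ is $(S\cap H_c)\cup(H_c\cap\overline{\B}_j)$, which is polynomially convex since it is essentially a ball. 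Kallin's lemma in the version allowing touching images at a single point then still applies, possibly after replacing the ball by a convex set chosen so that the hulls stay inside the right fibres. Alternatively, I would simply restrict to nonreal $c$ and use the covering by the three coordinate directions together with small complex rotations. Points of $\R^3\setminus S$ are then covered, because they lie off $H_c$ for every nonreal $c$.

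\textbf{Step 3 (the extra hypothesis) — the main obstacle.} I would verify whatever additional condition \cite[Theorem~1.6]{Kusakabe2024} imposes on a closed polynomially convex set, which is roughly a properness or hull condition at infinity relative to a coordinate projection. The point of adjoining $H_c$ is that it is exactly the kind of set for which this condition holds: the projection $(z_1,z_2,z_3)\mapsto z_1$ sends $H_c$ to a point, and $S$ to the real line. After a shear $z_1\mapsto z_1+\varepsilon(z_2^2+z_3^2)$, or a similar polynomial automorphism preserving the structure, $E_c$ can be placed in the position Kusakabe's hypothesis requires, exactly as in his treatment of $\R^k$ with $k<n$. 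I expect most of the work to go into checking this condition uniformly across the exhaustion. Once it is checked, Kusakabe's theorem gives that each $\C^3\setminus E_c$ is Oka, and localization completes the proof.
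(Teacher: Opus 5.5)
\textbf{Gap in Step 3.} The proposal has a genuine gap at Step 3, and it cannot be filled in the form you propose. For $n=3$, the hypothesis of \cite[Theorem~1.6]{Kusakabe2024} is the tube condition of \cref{thm:tube-criterion}. One needs a holomorphic automorphism $\varphi$ of $\C^3$ and $C>0$ with $\varphi(E)\subset\{(\zeta,w)\in\C\times\C^2:\norm w\le C(1+\abs\zeta)\}$. Adjoining $H_c$ makes this impossible for every $\varphi$.

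To see this, note that $V=\varphi(H_c)$ is a closed complex submanifold of $\C^3$ biholomorphic to $\C^2$. The tube condition makes $V\cap\{\abs\zeta\le R\}$ compact for every $R$, so $\zeta|_V$ would be a proper holomorphic function on $\C^2$. If it were constant, $V$ would be compact. If it were nonconstant, any nonempty fibre would be a compact one-dimensional analytic subset of $\C^2$. Both are absurd, and the same argument excludes the other hyperplanes $\{z_2=c\}$, $\{z_3=c\}$, $\{\ell=c\}$ you mention. So a complex hypersurface is exactly the kind of set that violates the hypothesis. The localization and Kallin-lemma work in Steps 1--2, even if correct, feeds into a theorem that can never apply to $E_c$.

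\textbf{The missing idea.} The premise behind the detour is also mistaken: the tube condition does not fail for closed $S\subset\R^3$. What fails for $S=\R^3$ is only the affine version of the condition. An affine $\zeta$ restricted to $\R^3$ is constant along lines on which $w$ is unbounded. What is needed is a nonlinear automorphism whose quadratic term cannot cancel against the real coordinate $x_1$, and this forces the coefficient in your shear to be non-real.

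The paper uses $\Phi(z)=(z_1+i(z_2^2+z_3^2),z_2,z_3)$ (\cref{r3:prop:tube-estimate}). For $x\in\R^3$ one gets $w=(x_2,x_3)$ and $\Ima\zeta=\norm w^2$. Hence $\norm w\le 1+\norm w^2\le 1+\abs\zeta$. With a real $\varepsilon$ instead, $\zeta$ is real and vanishes at $(-\varepsilon(x_2^2+x_3^2),x_2,x_3)$ for all real $x_2,x_3$, so the condition fails for $S=\R^3$.

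Since the estimate holds on all of $\R^3$, the paper applies Kusakabe's theorem directly to $E=S$ with $C=1$. It uses only that $S$ is closed and polynomially convex: each $S\cap\{\norm z\le j\}$ is a compact subset of $\R^3$, hence polynomially convex. No hypersurfaces, localization, or Kallin's lemma are needed.
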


The missing case $(3,3)$ is not caused by a failure of polynomial convexity:
every compact subset of $\R^m\subset\C^m$ is polynomially convex
\cite[Example~1]{Forstneric1994RungeComplements}. The difficulty is to meet the
geometric hypothesis in Kusakabe's theorem. In the present dimension, that
hypothesis requires a holomorphic change of coordinates and a splitting
$\C^3=\C\times\C^2$ in which the Euclidean norm of the second coordinate is
bounded on the closed set by a constant times one plus the modulus of the
first coordinate. A quadratic polynomial automorphism records the squared
length of the last two real coordinates in the imaginary part of the first
coordinate and thereby gives this estimate. The same automorphism works for
every relatively closed $S\subset\R^3$, so the theorem follows from
\cite[Theorem~1.6]{Kusakabe2024} without a new complement criterion. The
remaining exceptional pair $(2,2)$ is not addressed here.

\par\vspace{0.5\baselineskip}
\noindent\textbf{The closed Hartogs triangle.}
Write $\Cstar=\C\setminus\{0\}$.
Forstneri\v c asked in \cite[Problem~4.24]{Forstneric2023} whether the
complement of the closed Hartogs triangle
$$
  H=\{(z_1,z_2)\in\C^2:\abs{z_1}\le\abs{z_2}\le1\}
$$
is Oka. This question was again listed as open in the subsequent survey
\cite[p.~664]{Forstneric2026ICM}. An affirmative answer to Problem~4.24 follows.

\begin{theorem}\label{thm:hartogs-complement}
The complement $\C^2\setminus H$ of the closed Hartogs triangle is an Oka
manifold.
\end{theorem}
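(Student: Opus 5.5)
The plan is to avoid treating $\C^2\setminus H$ as a complement of a compact set. A direct approach fails because $H$ is not polynomially convex: every monomial $z_1^az_2^b$ has supremum $1$ on $H$, so $\widehat H$ is the closed bidisc. Instead I would invoke Kusakabe's localization principle: a complex manifold is Oka if it is covered by Zariski-open subsets, complements of closed analytic subvarieties, each of which is Oka. Each chart will be reduced, through a biholomorphism and an unbranched covering, to a complement of a closed polynomially convex set. That complement should satisfy the geometric hypothesis of \cite[Theorem~1.6]{Kusakabe2024}, namely that one coordinate is bounded on the set by a constant times one plus the modulus of the other. Since a covering space of a manifold is Oka if and only if the base is Oka, it suffices to check the lifted sets.

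\emph{First chart.} Let $U_1=(\C^2\setminus H)\cap\{z_2\neq0\}$. The map $(z_1,z_2)\mapsto(w,z_2)=(z_1/z_2,z_2)$ is a biholomorphism of $\C\times\Cstar$ onto itself. It sends $H\cap\{z_2\ne0\}$ to $\overline{\D}\times(\overline{\D}\setminus\{0\})$. Pulling back by the universal covering $(w,\zeta)\mapsto(w,e^{\zeta})$ of $\C\times\Cstar$, the lift of $U_1$ becomes
$$
  \C^2\setminus E_1,\qquad E_1=\overline{\D}\times\{\zeta:\operatorname{Re}\zeta\le0\}.
$$
Here $E_1$ is a product of closed convex sets, hence closed, convex and polynomially convex. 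Moreover $\abs{w}\le1$ on $E_1$, so the estimate required by Kusakabe holds with the splitting $(\zeta,w)$. Therefore the lift is Oka, and so is $U_1$.

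\emph{Second chart.} The points of $\C^2\setminus H$ not in $U_1$ are the points $(z_1,0)$ with $z_1\ne0$. To cover them I would take $U_2=(\C^2\setminus H)\cap\{z_1\ne0\}$ and again look for a biholomorphism of $\C^*\times\C$ combined with an exponential covering that turns $H\cap\{z_1\ne0\}=\{0<\abs{z_1}\le\abs{z_2}\le1\}$ into a set with a bounded direction and polynomially convex compact pieces. In the coordinates $(u,v)=(z_1,z_2/z_1)$ with $u=e^{s}$, the set becomes $\{\operatorname{Re}s+\log\abs v\le0,\ \abs v\ge1\}$. Here $v$ is unbounded, but in each fibre over $v$ the set is a half-plane in $s$.

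\textbf{Main obstacle.} I expect the main obstacle to be exactly this second chart. The naive lift, using coordinates $(s,z_2)$ with $z_1=e^s$, gives fibres that are annuli $e^{\operatorname{Re}s}\le\abs{z_2}\le1$ in $z_2$, whose hulls fill in to discs, so polynomial convexity fails. I would first try to choose a different Zariski-open neighbourhood of the punctured line $\{z_2=0,\ z_1\neq0\}$, for instance the complement of a curve $\{z_2=c\,z_1^k\}$ or of $\{z_1z_2=c\}$. The aim is a chart in which a holomorphic change of variables makes the removed set fibre over a closed half-plane with uniformly bounded convex fibres, as in the first chart. Failing that, I would show directly that $U_2$ is Oka as a union of two Oka domains whose structure allows a gluing argument. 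An example is $\{\abs{z_2}<\abs{z_1}\}\cup\{\abs{z_2}>1\}$ intersected with $\{z_1\ne0\}$, which is a fibration over $\Cstar$ by Oka fibres.
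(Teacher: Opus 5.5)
You have the right overall strategy: Kusakabe's localization over Zariski-open charts obtained by deleting complex lines. Your first chart $\Omega\cap\{z_2\ne0\}$ is exactly one of the two charts the paper uses. There are, however, two genuine gaps.

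\textbf{First chart.} Your justification misreads Kusakabe's theorem (\cref{thm:tube-criterion}). The splitting there is $\C^{n-2}\times\C^2$, so for $n=2$ the first factor is a point and the hypothesis says that $\varphi(E)$ is bounded. Your set $E_1=\overline\D\times\{\operatorname{Re}\zeta\le0\}$ is unbounded, and no automorphism can make it bounded. The version you state, $\abs w\le C(1+\abs z)$ for a splitting $\C\times\C$, is false. For example, $E=\C\times\overline\D$ is closed, polynomially convex and satisfies it, yet $\C\times(\C\setminus\overline\D)$ is not Oka: by Liouville, every holomorphic map from $\C^m$ into it has constant second coordinate. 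Your version would also settle the case $(2,2)$, which the paper records as open. The repair is to drop the covering altogether. The map $(z_1/z_2,z_2)$ sends your chart onto $(\C\times\Cstar)\setminus(\overline\D\times\overline\D)$. The Forstneri\v c--Forn\ae ss Wold result, that $(\C\times\Cstar)\setminus K$ is Oka for compact polynomially convex $K\subset\C^2$, then applies directly.

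\textbf{Second chart.} This is the more serious gap: the second chart is never carried out, and the chart you chose does not lead to a compact model. The line $\{z_1=0\}$ meets $H$ in the disc $\{0\}\times\overline\D$. In your coordinates $(u,v)=(z_1,z_2/z_1)$ the removed set is $\{u\ne0,\ \abs v\ge1,\ \abs{uv}\le1\}$, whose closure contains the unbounded set $\{0\}\times\{\abs v\ge1\}$. Your fallback also fails. Over $0<\abs{z_1}\le1$ the fibre of $z_1$ is a disc together with the exterior of the closed unit disc, which is not Oka, and the projection is not locally trivial across $\abs{z_1}=1$. Moreover, gluing Oka domains that are not Zariski open is outside the localization theorem.

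The missing idea is that your first chart is the case $a=0$ of a family. For $\abs a<1$, the line $L_a=\{z_2=az_1\}$ meets $H$ only at the origin, so $H\setminus L_a=H\setminus\{0\}$. The coordinates $u=z_1/(z_2-az_1)$, $v=z_2-az_1$ map $\Omega\setminus L_a$ biholomorphically onto $(\C\times\Cstar)\setminus K_a$, where
\[
K_a=\{(u,v)\in\C^2:\abs{(1-\abs a^2)u-\overline a}\le1,\ \abs{(1+au)v}\le1\}.
\]
This uses the equivalence $\abs u\le\abs{1+au}\iff\abs{(1-\abs a^2)u-\overline a}\le1$. One checks $\abs u\le1/(1-\abs a)$ and $\abs v\le1+\abs a$ on $K_a$, so $K_a$ is a compact polynomial polyhedron and hence polynomially convex. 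This is precisely the $k=1$, $\abs c<1$ case of the curves $\{z_2=cz_1^k\}$ you list. Since $L_0\cap L_{1/2}=\{0\}\subset H$, the charts for $a=0$ and $a=1/2$ cover $\Omega$, and Kusakabe's localization theorem completes the proof.
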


Forstneri\v c and Forn\ae ss Wold proved that
$(\C\times\Cstar)\setminus K$ is Oka whenever $K\subset\C^2$ is compact and
polynomially convex \cite[Corollary~3.2]{ForstnericWold2024}. The difficulty is
to relate the Hartogs complement to this model. After a complex line through
the origin is removed, the ratio of two linear coordinates records the
complex direction of a point. For two suitable lines, these directional
coordinates identify the corresponding portions of $\C^2\setminus H$ with
complements of the required form. The two portions are Zariski open and cover
the whole complement. Kusakabe's localization theorem states
that a connected complex manifold is Oka if every point has a Zariski-open Oka
neighborhood \cite[Theorem~1.4]{Kusakabe2021}, so it proves
\cref{thm:hartogs-complement}. Thus the Hartogs inequalities are handled by
two explicit local models.

\par\vspace{0.5\baselineskip}
\noindent\textbf{Blow-ups along connected Oka centers.}
The standard analytic blow-up of a complex manifold $X$ along a closed
complex submanifold $A$ replaces each point of $A$ by the projective space of
complex lines in its normal space. Denote the blow-up by $\Bl_A X$, its
blow-down map by $\pi\colon\Bl_A X\to X$, and its exceptional divisor by
$\pi^{-1}(A)$. Forstneri\v c asked which operations,
including blow-ups and blow-downs, preserve the Oka property in the list of
open problems from his 2011 Krems lecture
\cite[p.~143]{Forstneric2011Krems}. On the positive side, L\'arusson and
Truong proved that, for $n\ge2$, the blow-up of $\C^n$ along any algebraic
submanifold (not necessarily connected) is Oka
\cite[Theorem~1]{LarussonTruong2017}. On the
negative side, Kusakabe used discrete sets constructed by Rosay
and Rudin to obtain non-Oka blow-ups of affine spaces along suitable closed
discrete centers \cite[Lemmas~A.1--A.2 and Example~A.3]{Kusakabe2021}. Our
result allows the center to be connected, smooth, and Oka.

For an open set $U\subset\C^n$ and a holomorphic map
$F=(F_1,\ldots,F_n)\colon U\to\C^n$, set
$$
  \Jac F=\det\left(\frac{\partial F_i}{\partial z_j}\right)_{i,j=1}^n,
  \qquad
  \Crit(F)=\{z\in U:\Jac F(z)=0\}.
$$
The map is \emph{nondegenerate} when $\Jac F\not\equiv0$, and
$z\in U$ is a \emph{regular point} of $F$ when
$z\notin\Crit(F)$.
More generally, if $Y$ is an $n$-dimensional complex manifold, a holomorphic
map $f\colon\C^n\to Y$ is nondegenerate when $df$ has rank $n$ at some point.

\begin{definition}
An
$n$-dimensional complex manifold $Y$ is \emph{Brody volume hyperbolic} if
there is no nondegenerate holomorphic map $\C^n\to Y$.
\end{definition}

Here a continuous map $p\colon X\to Z$ is \emph{proper} if $p^{-1}(K)$ is
compact for every compact set $K\subset Z$.

\begin{theorem}\label{thm:connected-blowup}
For every integer $n\ge3$, there is a proper holomorphic embedding
$\gamma\colon\C\hookrightarrow\C^n$ whose image $A=\gamma(\C)$ is a closed connected
smooth complex curve such that $\Bl_A\C^n$ is Brody volume hyperbolic and
hence is not Oka.
\end{theorem}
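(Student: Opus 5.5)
The strategy is to combine the Rosay--Rudin technique, as Kusakabe used it for discrete centers, with an embedded curve that passes through a suitable discrete set. The basic observation concerns a nondegenerate holomorphic map $g\colon\C^n\to\Bl_A\C^n$. Its composition $F=\pi\circ g\colon\C^n\to\C^n$ is nondegenerate, because $\pi$ is biholomorphic off the exceptional divisor and $g$ cannot map an open set into the hypersurface $\pi^{-1}(A)$ with full rank. The lifting condition gives a strong constraint. Near a point $z$ with $F(z)\in A$ and $z$ a regular point of $F$, the map $F$ is a local biholomorphism, so $F^{-1}(A)$ is locally a smooth curve near $z$. A local biholomorphism onto a neighborhood of a point of $A$ does not lift to the blow-up, since lifting through $\pi$ requires the pullback of the ideal of $A$ to be locally principal. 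The ideal of a curve of codimension $n-1\ge2$ pulled back by a local biholomorphism is not principal. Hence every point of $F^{-1}(A)$ lies in $\Crit(F)$. Thus it suffices to build $A$ so that every nondegenerate $F\colon\C^n\to\C^n$ whose image meets $A$ has a regular point over $A$. If $F(\C^n)\cap A=\varnothing$, I would also need to exclude this case, and I would handle it by making $A$ contain a Rosay--Rudin type discrete set $E$.

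Concretely, I would first recall the Rosay--Rudin construction as packaged in \cite[Lemmas~A.1--A.2]{Kusakabe2021}. It gives a closed discrete set $E=\{a_j\}\subset\C^n$ with the following property: every nondegenerate holomorphic map $F\colon\C^n\to\C^n$ has a regular point $z$ with $F(z)\in E$. In fact $E$ can be chosen dense enough in the sense of the Rosay--Rudin argument, meaning it is built from a countable family of approximate targets. The key point to verify is that this property persists under enlarging $E$. Since $F(z)\in E\subset A$ with $z$ regular, any closed set $A\supset E$ inherits the conclusion that no nondegenerate lift exists. This monotonicity is immediate from the form of the property, so the main work is placing $E$ on an embedded line.

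The main step, and the expected obstacle, is to find a proper holomorphic embedding $\gamma\colon\C\hookrightarrow\C^n$ with $E\subset\gamma(\C)$. For $n\ge3$, I would interpolate using Forstneri\v c--Globevnik--Rosay style results on proper embeddings of $\C$ into $\C^n$ passing through a prescribed closed discrete set. I would try first an inductive Andersén--Lempert construction. Start with a line $L$ and a discrete set $D\subset L$, and choose an automorphism $\Phi$ of $\C^n$ with $\Phi(D)=E$. The set $E$ is tame-izable and all discrete sets are equivalent under automorphisms for $n\ge2$ when they are tame; Rosay--Rudin sets are typically non-tame, which is precisely the difficulty. So instead I would interleave the two constructions. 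I would build $E$ and $\gamma$ simultaneously, choosing each new point $a_j$ via the Rosay--Rudin density requirement within a prescribed small ball. I would then perturb $\gamma$ on a large disc by a small automorphism fixing the earlier points, so that the image passes through $a_j$ and stays proper. This uses $n\ge3$ to keep the image embedded via general position. The Rosay--Rudin property only needs the points in prescribed small balls, so the freedom is compatible.

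Finally I would assemble the argument. Take $A=\gamma(\C)$, which is smooth, connected, and biholomorphic to $\C$. The blow-up $\Bl_A\C^n$ admits no nondegenerate map from $\C^n$. Oka manifolds are dominable, so they admit such maps; hence $\Bl_A\C^n$ is not Oka.
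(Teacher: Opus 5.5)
Your overall architecture matches the paper's. First, $F=\pi\circ g$ is nondegenerate and satisfies $F^{-1}(A)\subset\Crit(F)$; your non-principal-ideal argument is a valid substitute for the paper's computation $\rank d\pi=2<n$ on the exceptional divisor. Second, everything then reduces to a closed discrete set $D$ with $F^{-1}(D)\not\subset\Crit(F)$ for every nondegenerate entire $F$, placed on a properly embedded $\C$.

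\textbf{The gap.} You attribute this regular-point property to Kusakabe's appendix without argument. As the paper points out, the Rosay--Rudin sets used there are only \emph{unavoidable}: they give $F^{-1}(D)\neq\varnothing$. That is exactly what yields no contradiction here, since your own first paragraph shows that every point of $F^{-1}(A)$ is critical. Producing the regular-point version is the main content of the paper's proof (\cref{bl:thm:regular-unavoidable}), and your proposal contains no argument for it. Closing the gap requires the following steps.
\begin{enumerate}
\item Refine the Rosay--Rudin compactness lemma. The setting is maps $a_2\B^n\to r_2\B^n$ with $\norm{G(0)}<r_1/2$ and $\abs{\Jac G}\ge c$ somewhere on $a_1\B^n$. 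Argue by contradiction, passing to a normal limit $F$ of bad maps $F_\ell$.
\item Show that $F$ sends some point $p$ of the connected dense set $\{\Jac F\neq0\}$ to $\partial(r_1\B^n)$. The maximum principle rules out $F$ staying in the closed ball.
\item Observe that $F_\ell\circ(F|_U)^{-1}\to\id$ in $\mathcal C^1$ near $F(p)$. A contraction argument then shows that $F_\ell$ hits a nearby point of the finite set $E_\ell$ at a point where $\Jac F_\ell\neq0$.
\item Verify that the downward induction of Rosay--Rudin's Lemma~4.4 and the affine-rigidity step of their Theorem~4.5 still work under the weaker hypothesis $F^{-1}(E_t)\subset\Crit(F)$, in place of $F(\C^n)\cap E_t=\varnothing$.
\item Finish by noting that a nondegenerate affine map is a surjective local biholomorphism, so it meets $D$ at a regular point.
\end{enumerate}

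You also misplace the difficulty. The embedding step is a direct citation: \cite[Proposition~2]{ForstnericGlobevnikRosay1996} places any closed discrete sequence tending to infinity on a properly embedded $\C$, with no tameness requirement. Your interleaved construction is therefore unnecessary. Your aside that $E$ is ``tame-izable'' is false. An unavoidable set is never tame: if an automorphism $\Phi$ moved $E$ into $\{z_2=0\}$, then $\Phi^{-1}\circ(z_1,e^{z_2},z_3,\ldots,z_n)$ would be a nondegenerate entire map omitting $E$.
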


The discrete-center examples of Kusakabe use closed discrete sets constructed
by Rosay and Rudin that every nondegenerate entire map
$\C^n\to\C^n$ must meet. A connected center creates an additional difficulty.
After such a set $D$ is placed on a complex curve, the mere condition
$F^{-1}(D)\ne\varnothing$ gives no contradiction in the blow-up argument: one
needs an intersection outside $\Crit(F)$. We therefore refine the construction
in \cite[Lemmas~4.3--4.4 and Theorem~4.5]{RosayRudin1988} to obtain a closed discrete set
$D\subset\C^n$ such that every nondegenerate entire map
$F\colon\C^n\to\C^n$ satisfies
\begin{equation}\label{eq:regular-unavoidable-intro}
  F^{-1}(D)\not\subset\Crit(F).
\end{equation}
In the present construction the finite subsets on spheres are chosen
so that a forced intersection has a regular preimage. Taking
their union over successively larger spheres retains this stronger conclusion
and produces a closed discrete set.

An interpolation theorem of Forstneri\v c, Globevnik, and Rosay
\cite[Proposition~2]{ForstnericGlobevnikRosay1996} places $D$ on a properly
embedded copy $A=\gamma(\C)\subset\C^n$. A nondegenerate map to the blow-up would,
after composition with the blow-down map, contradict
\eqref{eq:regular-unavoidable-intro}, because the differential of the blow-down
map loses rank along the exceptional divisor. Compared with the earlier
discrete-center counterexamples, the center here is connected, smooth, and
biholomorphic to the Oka manifold $\C$. It is noncompact, so the theorem does
not settle blow-ups at a point or along a compact center.

\par\vspace{0.5\baselineskip}
\noindent\textbf{The algebraic basic Oka property.}
Here an algebraic manifold is a smooth connected complex algebraic variety,
whereas an affine algebraic variety used as a source need not be smooth or
connected. A regular map is a morphism of algebraic varieties.

\begin{definition}\cite[pp.~200--201]{LarussonTruong2019}
An algebraic manifold $Y$ has the \emph{algebraic basic Oka property}
(aBOP) if, for every affine algebraic variety $X$, every continuous map
$f\colon X\to Y$ is homotopic, in the Euclidean topology of complex points,
to a regular map $g\colon X\to Y$.
\end{definition}

The formulation in \cite[Definition~6.14(a)]{Forstneric2023} restricts the
source to an affine algebraic manifold, hence to a smooth connected source.
The source constructed below is smooth and connected, so
\cref{thm:punctured-abop} disproves aBOP under either source convention.

L\'arusson and Truong proved that a positive-dimensional
algebraic manifold fails aBOP if it is compact or admits a nonconstant regular
map from $\Pj^1$ \cite[Theorem~2]{LarussonTruong2019}. Neither hypothesis holds
for $\C^n\setminus\{0\}$: this variety is noncompact, and, for every regular map
$f\colon\Pj^1\to\C^n\setminus\{0\}$, each coordinate of its composite with the
inclusion $\C^n\setminus\{0\}\hookrightarrow\C^n$ is a global regular function
on $\Pj^1$ and hence is constant; therefore $f$ is constant. Forstneri\v c singled out
$\C^2\setminus\{0\}$ in \cite[Problem~6.17]{Forstneric2023}; his later survey continued
to single out $\C^2\setminus\{0\}$ as a basic test case for aBOP
\cite[p.~668]{Forstneric2026ICM}.

\begin{theorem}\label{thm:punctured-abop}
For every integer $n\ge2$, the punctured affine space
$\C^n\setminus\{0\}$ does not satisfy aBOP.
\end{theorem}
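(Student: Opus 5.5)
The plan is to detect the obstruction in rational cohomology using Deligne's mixed Hodge structures. A regular map is a morphism of algebraic varieties, so it induces on cohomology a morphism of mixed Hodge structures. Such morphisms are strictly compatible with the weight filtration. A continuous map is under no such constraint. We therefore look for a smooth connected affine source $X$ carrying a cohomology class of the "wrong" weight that is nevertheless realized by a continuous map to $\C^n\setminus\{0\}$.

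\emph{Weight of the target.} $\C^n\setminus\{0\}$ is homotopy equivalent to $S^{2n-1}$, so $H^{2n-1}(\C^n\setminus\{0\};\Q)\cong\Q$. Via the Gysin/localization sequence for the pair $(\C^n,\C^n\setminus\{0\})$, this group is isomorphic to $H^{2n}_{\{0\}}(\C^n;\Q)\cong\Q(-n)$. It is therefore pure of weight $2n$ (Hodge type $(n,n)$). Consequently, for every regular map $g\colon X\to\C^n\setminus\{0\}$, the class $g^*\omega$ of the generator $\omega$ lies in the weight-$2n$ part of $H^{2n-1}(X;\Q)$. In particular, $g^*\omega=0$ whenever $H^{2n-1}(X;\Q)$ is pure of some weight different from $2n$.

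\emph{Choice of source.} Take $X=(\Cstar)^{2n-1}$, which is a smooth connected affine variety of complex dimension $2n-1$. By the Künneth formula, $H^{2n-1}(X;\Q)\cong\Q$, generated by the product of the classes $[dz_j/2\pi i z_j]$. Each factor has type $(1,1)$ and weight $2$, so this group is pure of weight $2(2n-1)=4n-2$. Since $n\ge2$, we have $4n-2>2n$. Hence every regular map $X\to\C^n\setminus\{0\}$ pulls $\omega$ back to zero in rational cohomology; equivalently, every such map has degree $0$ on the top torus class. The same conclusion can be cross-checked elementarily by writing the map in coordinates $z_j^{\pm1}$ and integrating over the real torus $(S^1)^{2n-1}$. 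However, the weight argument is cleaner and is the one I would present.

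\emph{Continuous map with nonzero pullback.} $X$ deformation retracts onto the real torus $T=(S^1)^{2n-1}$, which has dimension $2n-1$. Collapsing the complement of an open top cell of $T$ gives a degree-one map $T\to S^{2n-1}$. Composing the retraction $X\to T$, this collapse map, and a homotopy equivalence $S^{2n-1}\to\C^n\setminus\{0\}$ yields a continuous map $f$ with $f^*\omega$ equal to the generator of $H^{2n-1}(X;\Z)$, which is nonzero rationally. Homotopic maps induce the same map on cohomology. Therefore $f$ is not homotopic to any regular map, and $\C^n\setminus\{0\}$ fails aBOP. Because $X$ is smooth and connected, this works under either source convention.

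The main obstacle is the rigorous justification of the weight computations: the purity of weight $2n$ for $H^{2n-1}(\C^n\setminus\{0\})$ via the localization sequence, and the functoriality and strictness of Deligne's weight filtration under regular maps. Both are standard, so I would cite Deligne's Hodge II rather than reprove them. The remaining steps are routine homotopy theory.
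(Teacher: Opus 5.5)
Your argument is correct. It differs from the paper's proof only in the choice of source variety.

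The paper uses $X_n=E^\circ\times(\Cstar)^{2n-2}$, where $E^\circ$ is an affine elliptic curve. Then $H^{2n-1}(X_n;\Q)$ is pure of weight $4n-3$, with Hodge types $(2n-1,2n-2)$ and $(2n-2,2n-1)$. You use $(\Cstar)^{2n-1}$, whose $H^{2n-1}$ is $\Q(-(2n-1))$, pure of weight $4n-2$.

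The mechanism is otherwise the same in both proofs:
\begin{itemize}
\item the Gysin sequence gives $H^{2n-1}(\C^n\setminus\{0\};\Q)\cong\Q(-n)$;
\item functoriality of Deligne's weight filtration forces $g^*=0$ for every regular $g$, since the source has no weight-$2n$ part;
\item a degree-one map from the real torus to $S^{2n-1}$ gives a continuous map with nonzero pullback.
\end{itemize}

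Your choice is simpler. Everything is of Tate type, so the obstruction reduces to $\operatorname{Hom}_{\mathrm{MHS}}(\Q(-n),\Q(-(2n-1)))=0$ for $n\ge2$. You also avoid the smoothness and irreducibility check, the genus computation, the retraction onto $S^1\vee S^1$, and the Hodge numbers of $E$.

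The elliptic factor does buy the paper one thing: the same template also works at $n=1$. There the source is $E^\circ$, of weight $1$ against the target's weight $2$, which recovers the failure of aBOP for $\Cstar$. Your torus source degenerates at $n=1$, where $\id_{\Cstar}$ is regular with nonzero pullback. Since the theorem only concerns $n\ge2$, this costs you nothing.

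One small remark: you only need that $g^*$ preserves the weight filtration, not strict compatibility.
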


The one-dimensional target $\Cstar$ was already known not to satisfy aBOP
\cite[Theorem~4.2.3]{Dye2022}; the case $n=2$ of
\cref{thm:punctured-abop} answers Problem~6.17.

The construction following \cite[Problem~6.17]{Forstneric2023} already gives a
regular map from a smooth affine quadric that represents a generator of
$\pi_3(\C^2\setminus\{0\})$. Thus nontrivial target homotopy alone cannot
obstruct aBOP; the source must also retain algebraic information. We use the
explicit smooth affine variety
$$
  X_n=\{(x,y)\in\C^2:y^2=x^3-x\}\times(\Cstar)^{2n-2}.
$$
Throughout the paper, $H^\bullet(-;\Q)$ denotes singular cohomology with rational
coefficients. The topology of $X_n$ gives a continuous map
$F_n\colon X_n\to\C^n\setminus\{0\}$ for which
\begin{equation}\label{eq:nonzero-pullback-intro}
  F_n^*\colon
  H^{2n-1}(\C^n\setminus\{0\};\Q)
  \longrightarrow H^{2n-1}(X_n;\Q)
\end{equation}
is nonzero. Deligne associates to the rational cohomology of every complex
algebraic variety an increasing filtration by subspaces, called the weight
filtration, and regular pullbacks preserve this filtration
\cite[Theorem~3.2.5(iii)]{Deligne1971}. Set
$Y_n=\C^n\setminus\{0\}$. The calculation in \cref{ab:prop:weights} shows that
this preservation forces every regular map $X_n\to Y_n$ to have zero pullback
in degree $2n-1$. This is incompatible with
\eqref{eq:nonzero-pullback-intro}, since homotopic maps induce the same map in
cohomology. This obstruction applies to $\C^n\setminus\{0\}$ for every $n\ge2$.

\par\vspace{0.5\baselineskip}
\noindent\textbf{Structure of the paper.}
In \Cref{sec:r3-main}, the polynomial convexity required at the point
of use is verified and the quadratic polynomial automorphism is constructed;
the section then applies Kusakabe's theorem to closed subsets of $\R^3$.
\Cref{sec:hartogs} contains an explicit coordinate description of the Hartogs
complement, the compactness and polynomial convexity arguments, and the
application of Kusakabe's localization theorem.
\Cref{sec:blowup-main} is devoted to the closed discrete set that every
nondegenerate entire map meets at a regular point, its embedding in a
connected Oka center, and the proof of the blow-up theorem. \Cref{sec:abop-main}
constructs the source varieties and continuous maps and proves the
obstruction to aBOP coming from the weight filtration.

\section{Complements of closed subsets of
\texorpdfstring{$\R^3$}{R3}}\label{sec:r3-main}

The starting point is the polynomial automorphism used to verify the
hypothesis of Kusakabe's theorem.
Identify $\C^3$ with $\C\times\C^2$ by using coordinates
$(\zeta,w_1,w_2)$ on the target.

\begin{proposition}\label{r3:prop:tube-estimate}
The polynomial map
$$
  \Phi\colon\C^3\longrightarrow\C\times\C^2,
  \qquad
  \Phi(z_1,z_2,z_3)
  =\bigl(z_1+i(z_2^2+z_3^2),z_2,z_3\bigr),
$$
is a polynomial automorphism with inverse
$$
  \Psi(\zeta,w_1,w_2)
  =\bigl(\zeta-i(w_1^2+w_2^2),w_1,w_2\bigr).
$$
Moreover, for every $x=(x_1,x_2,x_3)\in\R^3$, if
$$
  \Phi(x)=(\zeta,w)\in\C\times\C^2,
$$
then
$$
  \norm{w}\leq1+\abs{\zeta}.
$$
Consequently,
$$
  \Phi(\R^3)
  \subset
  \left\{(\zeta,w)\in\C\times\C^2:
    \norm{w}\leq1+\abs{\zeta}
  \right\}.
$$
\end{proposition}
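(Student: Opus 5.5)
The plan is first to check directly that $\Phi\circ\Psi$ and $\Psi\circ\Phi$ are the identity. Both maps are polynomial, so this shows that $\Phi$ is a polynomial automorphism. The check is immediate: $\Phi$ leaves the last two coordinates unchanged and adds $i(z_2^2+z_3^2)$ to the first, while $\Psi$ subtracts the same quantity $i(w_1^2+w_2^2)$ after those coordinates are relabeled. Composing in either order therefore returns $z_1$ (respectively $\zeta$) and fixes the other coordinates.

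For the estimate, take $x=(x_1,x_2,x_3)\in\R^3$. Then $\zeta=x_1+i(x_2^2+x_3^2)$ and $w=(x_2,x_3)$, which is a real vector, so $\norm{w}^2=x_2^2+x_3^2=\Ima\zeta$. This gives
$$
  \norm{w}^2=\Ima\zeta\le\abs{\zeta}.
$$
To obtain the linear bound $\norm{w}\le1+\abs{\zeta}$ from this quadratic one, split into two cases. If $\norm{w}\le1$, the bound holds trivially. If $\norm{w}>1$, then $\norm{w}\le\norm{w}^2\le\abs{\zeta}$. Either way $\norm{w}\le 1+\abs{\zeta}$; alternatively, $t\le 1+t^2$ holds for every real $t$. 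The final inclusion is just this pointwise statement restated as a set inclusion.

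No step is a real obstacle. The only point that needs attention is that the estimate uses the reality of $x_2,x_3$, which guarantees $x_2^2+x_3^2=\norm{w}^2\ge0$ and that this quantity sits in the imaginary part of $\zeta$. For complex $z$ this fails, which is why the conclusion is stated only for $\R^3$.
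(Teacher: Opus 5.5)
Your proof is correct and follows the paper's argument: you verify the inverse by direct substitution and then use $\norm{w}^2=\Ima\zeta\le\abs{\zeta}$ together with $t\le 1+t^2$. Your case split on $\norm{w}\le1$ versus $\norm{w}>1$ is only an equivalent way of deriving that same elementary inequality.
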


\begin{proof}
Direct substitution gives $\Psi\circ\Phi=\id_{\C^3}$ and
$\Phi\circ\Psi=\id_{\C\times\C^2}$, so $\Phi$ is a polynomial automorphism
with inverse $\Psi$.

Set
$$
  r=\sqrt{x_2^2+x_3^2}\geq0.
$$
The definition of $\Phi$ gives
$$
  \zeta=x_1+i(x_2^2+x_3^2)=x_1+ir^2,
  \qquad
  w=(x_2,x_3).
$$
Thus $\norm w=r$ and $r^2=\Ima\zeta\leq\abs\zeta$. Since
$r\leq1+r^2$ for $r\geq0$, it follows that
$$
  \norm{w}=r\leq1+r^2\leq1+\abs{\zeta}.
$$
\end{proof}

The following is the required form of Kusakabe's theorem for a possibly
unbounded closed set \cite[Theorem~1.6]{Kusakabe2024}.

\begin{theorem}\label{thm:tube-criterion}
Suppose that $n>1$ and that $E\subset\C^n$ is a closed polynomially convex set
for which there exist a constant $C>0$ and a holomorphic automorphism $\varphi$ of
$\C^n$ such that, under the splitting $\C^n=\C^{n-2}\times\C^2$,
$$
  \varphi(E)\subset
  \{(z,w)\in\C^{n-2}\times\C^2:
       \norm w\le C(1+\norm z)\}.
$$
Then $\C^n\setminus E$ is Oka.
\end{theorem}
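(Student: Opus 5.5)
The plan is to obtain \cref{thm:tube-criterion} as a direct specialization of \cite[Theorem~1.6]{Kusakabe2024}, not to reprove it. The work is to check that the hypotheses as formulated here imply Kusakabe's hypotheses. Three points need checking: the meaning of polynomial convexity for an unbounded closed set, the form of the growth condition, and the role of the automorphism $\varphi$.

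First, I will compare conventions for polynomial convexity. Kusakabe defines the hull of a closed set $E$ through a compact exhaustion $\{E_j\}$ as $\bigcup_j\widehat{E_j}$. This agrees with \cref{def:polynomial-convexity}, and we have already observed that it is independent of the chosen exhaustion. So "closed polynomially convex set" means the same thing in both places. I will also record which set Kusakabe requires to be polynomially convex. Polynomial convexity is not preserved by arbitrary holomorphic automorphisms, so this matters. If his theorem asks that $E$ itself be polynomially convex and that some automorphism carry $E$ into the tube, our hypotheses match verbatim. If instead it asks for polynomial convexity of the image $\varphi(E)$, that would be a stronger requirement than what is stated here, and I would need to check his precise formulation before proceeding.

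Second, I will normalize the constant. If Kusakabe's statement uses a fixed constant, or a slightly different tube such as $\norm w\le C(1+\norm z)$ in another form, I will compose $\varphi$ with the linear automorphism $(z,w)\mapsto(z,w/C)$. This composite is again a holomorphic automorphism of $\C^n$, and it carries $E$ into $\{\norm w\le1+\norm z\}$. Any other admissible normalization follows in the same way.

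Third, I will pass to the image using biholomorphic invariance. Since $\varphi$ restricts to a biholomorphism $\C^n\setminus E\to\C^n\setminus\varphi(E)$, and the Oka property is invariant under biholomorphisms, it suffices to know that $\C^n\setminus\varphi(E)$ is Oka. This is exactly the conclusion of \cite[Theorem~1.6]{Kusakabe2024}.

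The main obstacle is the first point, namely confirming that Kusakabe's polynomial convexity assumption is placed on $E$ rather than on $\varphi(E)$. If it were placed on $\varphi(E)$, the statement here would need strengthening. For the application to \cref{thm:r3-complement} this causes no trouble in either case. The relevant sets there are $S$, which is polynomially convex since compact subsets of $\R^3$ are, and $\Phi(S)$ from \cref{r3:prop:tube-estimate}. The image $\Phi(S)$ can be handled because a polynomial automorphism maps polynomially convex compacta to polynomially convex compacta: the polynomial algebra is preserved under composition with $\Phi$ and $\Psi$.
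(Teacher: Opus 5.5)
Your plan matches the paper's: the paper gives no separate argument and simply records this statement as the closed-set form of \cite[Theorem~1.6]{Kusakabe2024}. In that form polynomial convexity (in the exhaustion sense of \cref{def:polynomial-convexity}) is imposed on $E$ itself, and $\varphi$ and $C>0$ are arbitrary, so the citation applies verbatim and your first point resolves favorably. Your rescaling step is therefore harmless but unnecessary. Your third step should be dropped rather than relied on: applying Kusakabe's theorem to $\varphi(E)$ would require $\varphi(E)$ to be polynomially convex, which is not assumed and is not preserved by general holomorphic automorphisms. Applying the theorem to $E$ with $\varphi$ gives directly that $\C^n\setminus E$ is Oka.
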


\begin{proof}[Proof of Theorem~\ref{thm:r3-complement}]
Let $S\subset\R^3$ be closed in the relative topology. Since the standard
real subspace $\R^3$ is closed in $\C^3$, the set $S$ is closed in $\C^3$.

For each $j\geq1$, set
$$
  S_j=S\cap\{z\in\C^3:\norm z\leq j\}.
$$
Each $S_j$ is a compact subset of $\R^3$ and is therefore polynomially
convex by \cite[Example~1]{Forstneric1994RungeComplements}. The sequence
$\{S_j\}$ is a compact exhaustion of $S$, so
\cref{def:polynomial-convexity} gives
$$
  \widehat S
  =\bigcup_{j=1}^{\infty}\widehat{S_j}
  =\bigcup_{j=1}^{\infty}S_j
  =S.
$$
Thus $S$ is a closed polynomially convex subset of $\C^3$.

The automorphism and estimate in \cref{r3:prop:tube-estimate} give
$$
  \Phi(S)
  \subset\Phi(\R^3)
  \subset
  \left\{(\zeta,w)\in\C\times\C^2:
    \norm{w}\leq1+\abs{\zeta}
  \right\}.
$$
This is the hypothesis of Theorem~\ref{thm:tube-criterion} for the
splitting $\C^3=\C\times\C^2$ and $C=1$. Hence $\C^3\setminus S$ is Oka.
\end{proof}

\section{The closed Hartogs triangle}
\label{sec:hartogs}

Set $H=\{(z_1,z_2)\in\C^2:|z_1|\leq|z_2|\leq1\}$ and
$\Omega=\C^2\setminus H$.
Here $\D=\{z\in\C:|z|<1\}$ denotes the unit disc.
The set $H$ is compact. Its complement is connected because
$$
  \Omega
  =\{\abs{z_1}>\abs{z_2}\}
   \cup\{\abs{z_2}>1\};
$$
the first set is biholomorphic to $\Cstar\times\D$, the second is
$\C\times(\C\setminus\overline\D)$, and their intersection is nonempty.
Therefore $\Omega$ is a domain.

For each $a\in\D$, set
$L_a=\{(z_1,z_2)\in\C^2:z_2=az_1\}$ and $U_a=\Omega\setminus L_a$.
The line $L_a$ is closed in $\C^2$, so $U_a$ is open in $\Omega$ and hence is
a complex manifold. On $\C^2\setminus L_a$, consider the map
$$
  \Psi_a\colon\C^2\setminus L_a\longrightarrow\C\times\Cstar,
  \qquad \Psi_a(z_1,z_2)
  =\left(\frac{z_1}{z_2-az_1},z_2-az_1\right)=(u,v).
$$
Here $v=z_2-az_1$ vanishes precisely on $L_a$. Under the simultaneous scaling
$(z_1,z_2)\mapsto(\lambda z_1,\lambda z_2)$ with $\lambda\in\Cstar$, both $z_1$
and $v$ are multiplied by $\lambda$, so $u=z_1/v$ is unchanged and therefore
depends only on the projective direction $[z_1:z_2]$. On the chart
$\Pj^1\setminus\{[1:a]\}$, consisting exactly of the directions not represented
by $L_a$, it is an affine coordinate with inverse $u\mapsto[u:1+au]$.
Thus $\Psi_a$ records the projective direction in the coordinate $u$ and the
nonzero scalar in the coordinate $v$. The next lemma determines its image on
$U_a$.

\begin{lemma}\label{ht:lem:chart-model}
For every $a\in\D$, the set
$$
  K_a=\{(u,v)\in\C^2:|u|\leq|1+au|,\ |(1+au)v|\leq1\}
$$
is compact and polynomially convex, and $\Psi_a$ restricts to a
biholomorphism from $U_a$ onto $(\C\times\Cstar)\setminus K_a$.
\end{lemma}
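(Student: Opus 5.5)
The plan is to write down an explicit inverse for $\Psi_a$, use it to compute the image of $H\setminus L_a$, and then check compactness and polynomial convexity of $K_a$ directly. The only input needed is that the Möbius transformation $u\mapsto u/(1+au)$ behaves well when $\abs a<1$.

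\emph{Step 1: $\Psi_a$ is a biholomorphism.} Define
$$
  \Theta_a\colon\C\times\Cstar\longrightarrow\C^2\setminus L_a,
  \qquad \Theta_a(u,v)=\bigl(uv,\,v(1+au)\bigr).
$$
This map is holomorphic on all of $\C\times\Cstar$. It lands in $\C^2\setminus L_a$, because $z_2-az_1=v(1+au)-auv=v\ne0$. Direct substitution gives $\Theta_a\circ\Psi_a=\id$ and $\Psi_a\circ\Theta_a=\id$. Hence $\Psi_a$ maps $\C^2\setminus L_a$ biholomorphically onto $\C\times\Cstar$.

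\emph{Step 2: the image of $U_a$.} Since $U_a=(\C^2\setminus L_a)\setminus H$, it suffices to show that $\Psi_a(H\setminus L_a)=K_a\cap(\C\times\Cstar)$. Substituting $z_1=uv$ and $z_2=v(1+au)$, the defining inequalities $\abs{z_1}\le\abs{z_2}\le1$ become
$$
  \abs{u}\abs{v}\le\abs{1+au}\abs{v}\le1 .
$$
Because $v\ne0$, the first inequality is equivalent to $\abs u\le\abs{1+au}$. The second is exactly $\abs{(1+au)v}\le1$. So the image is $K_a\cap\{v\ne0\}$, and therefore $\Psi_a(U_a)=(\C\times\Cstar)\setminus K_a$.

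\emph{Step 3: compactness of $K_a$.} The set $K_a$ is closed. For the $u$-coordinate, $\abs u\le\abs{1+au}\le1+\abs a\abs u$ gives $\abs u\le 1/(1-\abs a)$. For the $v$-coordinate, on $K_a$ we have
$$
  \abs{1+au}\ge\max\bigl(\abs u,\,1-\abs a\abs u\bigr)\ge\frac{1}{1+\abs a},
$$
where the last bound comes from minimizing the maximum over $\abs u$. Then $\abs{(1+au)v}\le1$ gives $\abs v\le1+\abs a$. So $K_a$ is bounded, hence compact.

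\emph{Step 4: polynomial convexity of $K_a$.} This is the main point, but it should be routine because of the fibered structure of $K_a$. Let
$$
  \Delta=\{u\in\C:\abs u\le\abs{1+au}\},
$$
the preimage of the closed unit disc under $u\mapsto u/(1+au)$. Since $\abs a<1$, the pole $-1/a$ lies outside $\Delta$ (at the pole the inequality reads $1/\abs a\le0$, which fails). So $\Delta$ is a compact set bounded by a circle, i.e.\ a closed round disc, and in particular convex. Then
$$
  K_a=\{(u,v):u\in\Delta,\ \abs{(1+au)v}\le1\}.
$$
Take a point $(u_0,v_0)\notin K_a$; there are two cases.
\begin{itemize}
\item If $u_0\notin\Delta$, convexity of $\Delta$ gives an affine polynomial $\ell(u)$ with $\sup_\Delta\abs{e^{\ell}}<\abs{e^{\ell(u_0)}}$. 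A truncated Taylor polynomial of $e^{\ell}$ then separates $(u_0,v_0)$ from $K_a$. Alternatively, one can use that a closed disc in $\C$ is polynomially convex.
\item If $u_0\in\Delta$, then $\abs{(1+au_0)v_0}>1$. The polynomial $P(u,v)=(1+au)v$ satisfies $\abs P\le1$ on $K_a$, so $P$ separates the point.
\end{itemize}
Hence $\widehat{K_a}=K_a$. The only point needing care is the geometric description of $\Delta$ as a closed disc, which relies exactly on the hypothesis $\abs a<1$.
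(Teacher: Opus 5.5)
Your proposal is correct and follows the paper's proof essentially step for step: the same explicit inverse $\Theta_a$, the same substitution giving $\Psi_a(H\setminus L_a)=K_a\cap(\C\times\Cstar)$, and the same bounds $|u|\le 1/(1-|a|)$ and $|v|\le 1+|a|$. The only difference is in Step~4, and it is cosmetic. The paper uses the identity $|(1-|a|^2)u-\overline a|^2-1=(1-|a|^2)(|u|^2-|1+au|^2)$ to write $K_a$ explicitly as the compact polynomial polyhedron $\{|(1-|a|^2)u-\overline a|\le1,\ |(1+au)v|\le1\}$. This is your two-case separation argument with the disc $\Delta$ made explicit, and it also makes the closed-disc claim immediate; in your version, the boundedness of $\Delta$ actually comes from the Step~3 bound on $|u|$, not from the pole lying outside $\Delta$.
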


\begin{proof}
Fix $a\in\D$. First determine the image of $H\setminus L_a$ under $\Psi_a$.
On $\C^2\setminus L_a$ one has $z_2-az_1\ne0$, so the functions
$$
  u=\frac{z_1}{z_2-az_1},
  \qquad
  v=z_2-az_1
$$
are well defined and holomorphic. They satisfy
$$
  z_1=uv,
  \qquad
  z_2=v+a z_1=(1+a u)v.
$$
Set
$$
  \Theta_a\colon\C\times\Cstar\longrightarrow\C^2\setminus L_a,
  \qquad
  \Theta_a(u,v)=(uv,(1+a u)v).
$$
For every $(u,v)\in\C\times\Cstar$, its image satisfies
$$
  (1+a u)v-a(uv)=v\neq0,
$$
so $\Theta_a(u,v)\notin L_a$. Direct substitution into the formulas for
$\Psi_a$ and $\Theta_a$ gives
$$
  \Psi_a\circ\Theta_a=\id_{\C\times\Cstar},
  \qquad
  \Theta_a\circ\Psi_a=\id_{\C^2\setminus L_a}.
$$
Thus $\Psi_a$ is a biholomorphism.

Let $(u,v)\in\C\times\Cstar$, and set
$(z_1,z_2)=\Theta_a(u,v)$. Since $v\neq0$, the two defining
inequalities for membership in $H$ become
$$
  |z_1|\leq|z_2|
  \quad\Longleftrightarrow\quad
  |uv|\leq |(1+a u)v|
  \quad\Longleftrightarrow\quad
  |u|\leq|1+a u|,
  \qquad
  |z_2|\leq1
  \quad\Longleftrightarrow\quad
  |(1+a u)v|\leq1.
$$
Moreover, $L_a\cap H=\{0\}$, so $H\setminus L_a=H\setminus\{0\}$.
The equivalences above therefore show that
\begin{equation}\label{ht:eq:image-forbidden}
  \Psi_a(H\setminus\{0\})
  =K_a\cap(\C\times\Cstar).
\end{equation}
Combining the definition of $U_a$ with \eqref{ht:eq:image-forbidden} gives
\begin{align}
  \Psi_a(U_a)
  &=\Psi_a\bigl((\C^2\setminus L_a)\setminus(H\setminus\{0\})\bigr)
    \notag\\
  &=(\C\times\Cstar)
      \setminus\bigl(K_a\cap(\C\times\Cstar)\bigr)
    \notag\\
  &=(\C\times\Cstar)\setminus K_a.
  \label{ht:eq:image-Ua}
\end{align}
The last equality is an equality of subsets of $\C\times\Cstar$: points
of $K_a$ with $v=0$ do not belong to that ambient manifold and hence
do not affect the set difference.

It remains to prove that $K_a$ is compact and polynomially convex. The
identity
$$
  |(1-|a|^2)u-\overline a|^2-1
  =(1-|a|^2)\bigl(|u|^2-|1+au|^2\bigr).
$$
Since $1-|a|^2>0$, this gives the equivalence
$$
  |u|\leq|1+a u|
  \quad\Longleftrightarrow\quad
  |(1-|a|^2)u-\overline a|\leq1.
$$
Consequently,
\begin{equation}\label{ht:eq:Ka-polynomial}
  K_a
  =\{(u,v)\in\C^2:
       |(1-|a|^2)u-\overline a|\leq1,
       \ |(1+a u)v|\leq1\}.
\end{equation}

For $(u,v)\in K_a$, the triangle inequality gives directly
\begin{equation}\label{ht:eq:u-bound}
  |u|\leq|1+au|\leq1+|a|\,|u|,
  \qquad\text{hence}\qquad
  |u|\leq\frac1{1-|a|}.
\end{equation}
Moreover, the inequality $|u|\leq|1+au|$ gives
$$
  1=|(1+au)-au|
  \leq|1+au|+|a|\,|u|
  \leq(1+|a|)|1+au|.
$$
It follows that $|1+au|\geq1/(1+|a|)$, and the inequality
$|(1+au)v|\leq1$ gives
$|v|\leq1+|a|$. Together with \eqref{ht:eq:u-bound}, this shows that $K_a$ is
bounded and, being closed, compact.

For the fixed parameter $a$, set
$$
  p_a(u,v)=(1-|a|^2)u-\overline a,
  \qquad
  q_a(u,v)=(1+a u)v.
$$
Then $p_a,q_a\in\C[u,v]$, and \eqref{ht:eq:Ka-polynomial} becomes
$$
  K_a=\{(u,v)\in\C^2:|p_a(u,v)|\leq1,
                               \ |q_a(u,v)|\leq1\}.
$$
Accordingly, $K_a$ is a compact polynomial polyhedron and is therefore
polynomially convex
by \cite[Lemma~2.7.4]{Hormander1990}.
Together with \eqref{ht:eq:image-Ua}, this proves the lemma.
\end{proof}

\begin{corollary}\label{ht:lem:chart-oka}
For every $a\in\D$, the complex manifold $U_a$ is Oka.
\end{corollary}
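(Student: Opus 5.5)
The plan is to combine \cref{ht:lem:chart-model} with the theorem of Forstneri\v c and Forn\ae ss Wold quoted in the introduction \cite[Corollary~3.2]{ForstnericWold2024}. That theorem states that $(\C\times\Cstar)\setminus K$ is Oka whenever $K\subset\C^2$ is compact and polynomially convex.

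Fix $a\in\D$. By \cref{ht:lem:chart-model}, the set $K_a\subset\C^2$ is compact and polynomially convex, so \cite[Corollary~3.2]{ForstnericWold2024} applies directly with $K=K_a$ and shows that $(\C\times\Cstar)\setminus K_a$ is Oka. The same lemma says that $\Psi_a$ restricts to a biholomorphism from $U_a$ onto $(\C\times\Cstar)\setminus K_a$. Since the Oka property is invariant under biholomorphisms, as noted after the definition of Oka manifolds, it follows that $U_a$ is Oka.

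One point needs care. In \cite{ForstnericWold2024}, the set $(\C\times\Cstar)\setminus K$ means the complement of $K\cap(\C\times\Cstar)$ in $\C\times\Cstar$. This agrees with the set difference used in \eqref{ht:eq:image-Ua}, so no adjustment is needed even though $K_a$ meets $\{v=0\}$.

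I expect no genuine obstacle. The only thing to confirm is that the cited corollary imposes no extra hypothesis on $K$, such as avoiding the axis $\{v=0\}$ or connectedness of the complement. In the formulation quoted in the introduction it requires only compactness and polynomial convexity, and both have already been established.
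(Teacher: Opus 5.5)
Your proposal is correct and matches the paper's proof: it combines \cref{ht:lem:chart-model} (the set $K_a$ is compact and polynomially convex, and $\Psi_a$ maps $U_a$ biholomorphically onto $(\C\times\Cstar)\setminus K_a$) with \cite[Corollary~3.2]{ForstnericWold2024} for $N=2$, then uses biholomorphic invariance of the Oka property. Your remark about the set difference when $K_a$ meets $\{v=0\}$ is the same point the paper makes right after \eqref{ht:eq:image-Ua}.
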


\begin{proof}
By \cref{ht:lem:chart-model}, the manifold $U_a$ is biholomorphic to
$(\C\times\Cstar)\setminus K_a$ for a compact polynomially convex set
$K_a\subset\C^2$. Forstneri\v c and Forn\ae ss Wold proved that if $K$ is a
compact polynomially convex set in $\C^N$ for some $N>1$, then
$(\C^{N-1}\times\Cstar)\setminus K$ is an Oka manifold
\cite[Corollary~3.2]{ForstnericWold2024}. Applying this
theorem with $N=2$ and $K=K_a$ therefore shows that $U_a$ is Oka.
\end{proof}

\begin{proof}[Proof of \cref{thm:hartogs-complement}]
By \cref{ht:lem:chart-oka}, the two subsets
$$
  U_0=\Omega\setminus L_0,
  \qquad
  U_{1/2}=\Omega\setminus L_{1/2}
$$
are Zariski-open Oka manifolds, and $\Omega=U_0\cup U_{1/2}$.
Then every point of $\Omega$ has a Zariski-open Oka neighborhood. Kusakabe's
localization theorem \cite[Theorem~1.4]{Kusakabe2021} states that if
$Y$ is a connected complex manifold such that every point of $Y$ has a
Zariski-open Oka neighborhood, then $Y$ is an Oka manifold. Applying this theorem with
$Y=\Omega$ therefore shows that $\Omega$ is Oka, which proves
\cref{thm:hartogs-complement}.
\end{proof}

\section{Blow-ups along connected Oka centers}
\label{sec:blowup-main}

The proof of \cref{thm:connected-blowup} rests on a closed discrete
set $D\subset\C^n$ such that, for every nondegenerate entire map
$F\colon\C^n\to\C^n$, there is a point $z\in\C^n$ with $F(z)\in D$ and
$\Jac F(z)\ne0$.
Rosay and Rudin obtained unavoidable discrete sets by choosing finite subsets
of expanding spheres
\cite[Lemmas~4.3--4.4 and Theorem~4.5]{RosayRudin1988}. The argument below
chooses these finite sets so that the required intersection occurs at a
regular point.

\begin{theorem}\label{bl:thm:regular-unavoidable}
For every integer $n>1$, there is a nonempty closed discrete set
$D\subset\C^n$ such that
$$
F^{-1}(D)\not\subset\Crit(F)
$$
for every nondegenerate entire map $F\colon\C^n\to\C^n$.
\end{theorem}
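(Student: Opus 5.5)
The plan is to follow the Rosay--Rudin scheme of finite subsets on expanding spheres, strengthening the key lemma so that the forced hit occurs at a regular point. The basic device is a normalization. If $F$ is nondegenerate, pick $p$ with $\Jac F(p)\ne0$. After translating and composing with affine maps, we may assume $F(0)=0$ and $F'(0)=\id$. Any point $w$ in a small ball around $F(p)$ then has a preimage near $p$ at which $\Jac F\ne0$, since $F$ is a local biholomorphism there. To keep the family compact enough for a normal-families argument, I would work with the class $\mathcal F_R$: maps $F$ normalized with $F(0)=0$, $F'(0)=\id$, where the set $D$ is required to work for every such $F$. The general case reduces to this class by precomposing with an affine map. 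This changes neither $F(\C^n)$ nor the set of regular values attained at regular points, and it rescales $D$ only via the target affine map. To handle the target normalization, I would instead make $D$ ``unavoidable up to affine images'' in the Rosay--Rudin manner. Their Theorem~4.5 passes from the normalized statement to all nondegenerate maps via a countable dense set of affine transformations and a diagonal choice, and I would copy that step verbatim.

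\textbf{Key lemma (regular version of Lemma~4.3/4.4).} Given $r>0$ and $\varepsilon>0$, there is a finite set $E$ on the sphere $\{\norm w=r\}$ with the following property. For every holomorphic $F$ on a ball $B$ with $F(0)=0$, $F'(0)=\id$, and with $F(B)$ meeting the sphere in a suitable sense (e.g.\ $F$ not too close to a map omitting a neighborhood of the sphere), either $F$ hits $E$ at a point where $\Jac F\ne0$, or $F$ is $\varepsilon$-far from the class in a compactness sense. The proof would go by contradiction and Montel/Hurwitz. Suppose, for finite sets $E_k$ becoming dense in the sphere, maps $F_k$ exist with $F_k^{-1}(E_k)\subset\Crit(F_k)$. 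Pass to a limit $F$ that is still normalized, hence nondegenerate. Then $F$ maps some regular point $z_0$ onto the sphere; I would choose $z_0$ with $\Jac F(z_0)\ne0$ and $F(z_0)$ on the sphere, which is possible since $F(\mathrm{reg})$ is open and $F(B)$ crosses the sphere. Near $z_0$, $F$ is a biholomorphism onto a neighbourhood $V$ of $F(z_0)$. By Hurwitz/Rouché for maps (degree theory), each $F_k$ with $k$ large is also a biholomorphism from a neighbourhood of $z_0$ onto a set containing a fixed smaller neighbourhood $V'$ of $F(z_0)$, with nonvanishing Jacobian there. Density of $E_k$ gives a point of $E_k\cap V'$, and hence a regular preimage, which is a contradiction. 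The point of the modification is that Rosay--Rudin's Hurwitz argument already produces preimages near a point where the limit is a local biholomorphism, and Jacobian nonvanishing passes to nearby uniformly convergent maps.

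\textbf{Assembly.} Take radii $r_j\to\infty$ and finite sets $E_j$ on $\{\norm w=r_j\}$ from the lemma, with parameters shrinking appropriately, and set $D=\bigcup_j E_j$. The result is closed and discrete, since each sphere carries finitely many points. For a nondegenerate $F$, normalize it and use the growth alternative as in Rosay--Rudin Lemma~4.4. For some $j$, the restriction of the normalized map to a suitable ball lies in the compact family handled at stage $j$, because the sequence of dilations cannot stay far from all stages indefinitely. The main obstacle, I expect, is exactly this bookkeeping: making the compact family at stage $j$ invariant enough under the affine renormalizations, so that every nondegenerate $F$ eventually falls into one of them. I would first try to import Rosay--Rudin's exact families verbatim, changing only the conclusion ``$F(B)\cap E\ne\varnothing$'' to ``$F$ hits $E$ at a point of $B\setminus\Crit(F)$''. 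This requires checking that their contradiction argument always extracts a limit map that is nondegenerate at the relevant point, which the normalization $F'(0)=\id$ should guarantee after choosing the crossing point among regular points.
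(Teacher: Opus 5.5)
Your key lemma has the right local mechanism, and it is essentially the paper's claim: take a nondegenerate limit, find a regular point $p$ of the limit with $F(p)$ on the sphere, and use $\mathcal C^1$ convergence near $p$ to get preimages of points of the dense finite sets at which $\Jac F_\ell\ne0$. As written it needs two repairs.
\begin{itemize}
\item Montel's theorem needs a uniform bound, so the family must consist of maps $a_2\B^n\to r_2\B^n$. The normalization $F'(0)=\mathrm{id}$ gives no such bound.
\item The regular crossing point does not follow from openness of $F(\{\Jac F\ne0\})$. The limit only inherits $F(w)\in\partial(r_1\B^n)$. You need that $Q=\{\Jac F\ne0\}$ is connected and dense, and that $F(Q)$ meets $r_1\B^n$ because $F(0)$ lies inside. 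You also need a maximum-modulus argument: a nonconstant $F$ cannot stay in $\overline{r_1\B^n}$ while touching the sphere at an interior point. Connectedness of $F(Q)$ then gives the point.
\end{itemize}

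The genuine gap is the assembly, which you yourself leave unresolved.
\begin{itemize}
\item Normalizing $F(0)=0$ requires a translation in the target, and that moves the fixed set $D$.
\item Your remedy, making $D$ unavoidable up to affine images via a countable dense family of affine maps and a diagonal choice, is unjustified. A general affine map is none of the countably many, so you would need the finite sets to be stable under perturbations of the target map. Your lemma does not provide that.
\item The remark that the dilations cannot stay far from all stages is not an argument, and you never say how the final contradiction is reached.
\end{itemize}

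The missing idea is to avoid target normalization and let the parameters depend on a scale $t$.
\begin{itemize}
\item Use the family defined by $\norm{G(0)}<r_1/2$ and $\abs{\Jac G}\ge c$ at some point of $a_1\B^n$. A source-only normalization $z\mapsto F(p+F'(p)^{-1}z)$ would also work, since it does not move $D$.
\item For each integer $t$, take $t/2=a_1<a_2<\cdots\to3t/4$, $t=r_1<r_2<\cdots\to\infty$, $c=1/t$, and finite sets $E_{t,j}\subset\partial(r_j\B^n)$ from the lemma. Set $E_t=\bigcup_jE_{t,j}$.
\item Suppose $\Psi^{-1}(E_t)\subset\Crit(\Psi)$. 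Start at a level $J$ with $r_{J+1}>\max_{\overline{(3t/4)\B^n}}\norm{\Psi}$ and induct downward. At each step $\Psi(a_k\B^n)$ is connected and contains $\Psi(0)\in r_k\B^n$, so it lies in $r_k\B^n$. This yields $\Psi((t/2)\B^n)\subset t\B^n$.
\item Since $E_t\subset\C^n\setminus t\B^n$, the set $D=\bigcup_tE_t$ is closed and discrete.
\item A nondegenerate $F$ with $F^{-1}(D)\subset\Crit(F)$ meets these hypotheses for every large $t$. Hence $F((t/2)\B^n)\subset t\B^n$ for all large $t$, and this linear growth forces $F$ to be affine.
\item A nondegenerate affine map is surjective with $\Crit(F)=\varnothing$, which contradicts $F^{-1}(D)\subset\Crit(F)$. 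This last step is absent from your outline.
\end{itemize}
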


\begin{proof}[Proof of \cref{bl:thm:regular-unavoidable}]
Fix $n>1$, and write $\B^n$ for the unit ball in $\C^n$. Choose numbers
$0<a_1<a_2$, $0<r_1<r_2$, and $c>0$. Let
$\mathcal F$ be the family of
holomorphic maps $G\colon a_2\B^n\to r_2\B^n$ such that
$$
\norm{G(0)}<\frac{r_1}{2},
\qquad
\abs{\Jac G}\ge c\quad\text{at some point of }a_1\B^n.
$$
Choose nonempty finite sets
$E_1\subset E_2\subset\cdots\subset\partial(r_1\B^n)$ whose union is dense in
$\partial(r_1\B^n)$. We claim that, for some $\ell$, every $G\in\mathcal F$
satisfies
$$
G(a_1\B^n)\cap\partial(r_1\B^n)\ne\varnothing
\quad\Longrightarrow\quad
G^{-1}(E_\ell)\not\subset\Crit(G).
$$
If not, for every $\ell$ there are $F_\ell\in\mathcal F$ and
$z_\ell\in a_1\B^n$ such that
$$
F_\ell(z_\ell)\in\partial(r_1\B^n),
\qquad
F_\ell^{-1}(E_\ell)\subset\Crit(F_\ell).
$$
Choose $\xi_\ell\in a_1\B^n$ with $\abs{\Jac F_\ell(\xi_\ell)}\ge c$. The sets
$\overline{a_1\B^n}$ and $\overline{r_2\B^n}$ are compact, and the maps $F_\ell$
are uniformly bounded by $r_2$. By successively passing to subsequences,
compactness and Montel's theorem give three subsequences, selected along the
same original indices; the corresponding finite sets $E_\ell$ are retained as
well. Relabeling these subsequences and finite sets by $\ell$, we have
$$
z_\ell\longrightarrow w\in\overline{a_1\B^n},
\qquad
\xi_\ell\longrightarrow\xi\in\overline{a_1\B^n},
\qquad
F_\ell\longrightarrow F\quad\text{uniformly on compact subsets of }a_2\B^n,
$$
where $F\colon a_2\B^n\to\C^n$ is holomorphic and
$F(a_2\B^n)\subset\overline{r_2\B^n}$. Since
$\overline{a_1\B^n}\subset a_2\B^n$, the convergence is uniform on
$\overline{a_1\B^n}$, and therefore
$$
\norm{F_\ell(z_\ell)-F(w)}
\leq\norm{F_\ell(z_\ell)-F(z_\ell)}
   +\norm{F(z_\ell)-F(w)}\longrightarrow0.
$$
Cauchy estimates give uniform convergence of the first derivatives, and hence
of the Jacobian determinants, on $\overline{a_1\B^n}$. Thus
$$
\abs{\Jac F_\ell(\xi_\ell)-\Jac F(\xi)}
\leq\abs{\Jac F_\ell(\xi_\ell)-\Jac F(\xi_\ell)}
   +\abs{\Jac F(\xi_\ell)-\Jac F(\xi)}\longrightarrow0.
$$
It follows that
$$
F(w)\in\partial(r_1\B^n),
\qquad
\abs{\Jac F(\xi)}\ge c.
$$
In particular, $F$ is nondegenerate and $\norm{F(0)}\le r_1/2$.

Set $Q=\{z\in a_2\B^n:\Jac F(z)\ne0\}$. Since $\Jac F\not\equiv0$, its zero
set is a proper analytic subset of $a_2\B^n$. Its complement $Q$ is connected
and dense by \cite[Chapter~II, Remark~4.2]{DemaillyCADG}. Density and
$\norm{F(0)}\le r_1/2$ give
$F(Q)\cap r_1\B^n\ne\varnothing$. If
$F(Q)\subset\overline{r_1\B^n}$, density gives
$F(a_2\B^n)\subset\overline{r_1\B^n}$. Let
$e=F(w)/r_1$ and $g(z)=\langle F(z),e\rangle$, where the standard Hermitian
inner product is taken to be linear in the first variable. Since $\norm{e}=1$, the
Cauchy--Schwarz inequality gives
$$
\abs{g(z)}\leq\norm{F(z)}\leq r_1,
\qquad
g(w)=r_1.
$$
The maximum modulus principle gives $g\equiv r_1$. Hence, for every
$z\in a_2\B^n$,
$$
r_1=\abs{\langle F(z),e\rangle}\leq\norm{F(z)}\leq r_1.
$$
Equality in Cauchy--Schwarz and $g(z)=r_1$ now give
$F(z)=r_1e=F(w)$. Thus $F$ is constant, contradicting
$\abs{\Jac F(\xi)}\ge c$. Therefore $F(Q)$ also meets
$\C^n\setminus\overline{r_1\B^n}$, and its connectedness gives
$$
F(Q)\cap\partial(r_1\B^n)\ne\varnothing.
$$
Choose $p\in Q$ and $q=F(p)\in\partial(r_1\B^n)$. There are neighborhoods
$U$ of $p$ and $V$ of $q$ such that $F\colon U\to V$ is biholomorphic. Choose
$\delta>0$ with $\overline{B(q,\delta)}\subset V$, write
$\phi=(F|_U)^{-1}\colon V\to U$, and consider the holomorphic maps
$$
h_\ell=F_\ell\circ\phi\colon V\longrightarrow\C^n.
$$
The set $K=\phi(\overline{B(q,\delta)})$ is a compact subset of $U$. The
convergence $F_\ell\to F$ and the Cauchy estimates give uniform convergence
of both $F_\ell$ and $dF_\ell$ on $K$. Since $d\phi$ is bounded on
$\overline{B(q,\delta)}$, the identities
$$
h_\ell(y)-y=(F_\ell-F)(\phi(y)),
\qquad
dh_\ell(y)-I=(dF_\ell-dF)_{\phi(y)}\circ d\phi_y
$$
show that $h_\ell\to\operatorname{id}$ in the $\mathcal C^1$ topology on
$\overline{B(q,\delta)}$. For all sufficiently large $\ell$,
$$
\sup_{\overline{B(q,\delta)}}\norm{h_\ell-\operatorname{id}}<\frac\delta4,
\qquad
\sup_{\overline{B(q,\delta)}}\norm{dh_\ell-I}<\frac12.
$$
Here $\norm{h_\ell(y)-y}$ is the Euclidean norm on $\C^n$,
$\norm{dh_\ell(y)-I}$ is the induced operator norm, and $I$ is the identity map
on $\C^n$. If $dh_\ell(y)v=0$, then
$$
\norm v=\norm{(I-dh_\ell(y))v}
\leq\norm{I-dh_\ell(y)}\,\norm v<\frac12\norm v,
$$
so $v=0$. Thus $dh_\ell(y)$ is injective and hence, as an endomorphism of the
finite-dimensional vector space $\C^n$, invertible throughout
$\overline{B(q,\delta)}$.
For $y_0\in\overline{B(q,\delta/2)}$, the map
$T_{\ell,y_0}(y)=y+y_0-h_\ell(y)$ satisfies
$$
\norm{T_{\ell,y_0}(y)-q}<\frac{3\delta}{4},
\qquad
\norm{dT_{\ell,y_0}(y)}<\frac12
\quad\bigl(y\in\overline{B(q,\delta)}\bigr).
$$
The first estimate gives
$T_{\ell,y_0}(\overline{B(q,\delta)})\subset B(q,3\delta/4)
\subset\overline{B(q,\delta)}$, while the second and the convexity of the ball give
$\norm{T_{\ell,y_0}(y)-T_{\ell,y_0}(y')}\leq\frac12\norm{y-y'}$. The Banach
fixed-point theorem therefore gives $y\in\overline{B(q,\delta)}$ with
$T_{\ell,y_0}(y)=y$, or equivalently $h_\ell(y)=y_0$.

The reindexed sets $E_\ell$ are still increasing and have dense union in
$\partial(r_1\B^n)$. For every sufficiently large $\ell$, choose
$e_\ell\in E_\ell\cap B(q,\delta/2)$. Taking $y_0=e_\ell$ in the fixed-point
conclusion gives
$y_\ell\in\overline{B(q,\delta)}$ with $h_\ell(y_\ell)=e_\ell$. Set
$x_\ell=\phi(y_\ell)$. Then $F_\ell(x_\ell)=e_\ell$, while
$dh_\ell(y_\ell)=dF_\ell(x_\ell)\circ d\phi(y_\ell)$ and the invertibility of
$dh_\ell(y_\ell)$ and $d\phi(y_\ell)$ give $\Jac F_\ell(x_\ell)\ne0$. This
contradicts $F_\ell^{-1}(E_\ell)\subset\Crit(F_\ell)$ and proves the claim.

For each positive integer $t$, we next construct a discrete set
$E_t\subset\C^n\setminus t\B^n$ with a scale-$t$ growth property. Fix $t$ and
take sequences
$$
\frac t2=a_1<a_2<\cdots\longrightarrow\frac{3t}{4},
\qquad
t=r_1<r_2<\cdots\longrightarrow\infty.
$$
For every $j$, choose nonempty finite sets
$$
E_{t,j,1}\subset E_{t,j,2}\subset\cdots\subset\partial(r_j\B^n)
$$
whose union is dense in $\partial(r_j\B^n)$. Apply the claim with
$$
(a_1,a_2,r_1,r_2,c)=(a_j,a_{j+1},r_j,r_{j+1},1/t)
$$
and with the auxiliary sequence $E_\ell=E_{t,j,\ell}$. There is an index
$\ell_{t,j}$ for which the conclusion of the claim holds. Write
$E_{t,j}=E_{t,j,\ell_{t,j}}$. Consequently, every holomorphic map
$\Phi\colon a_{j+1}\B^n\to r_{j+1}\B^n$ satisfying
$$
\norm{\Phi(0)}<\frac{r_j}{2},
\qquad
\abs{\Jac \Phi(z)}\ge\frac1t
\quad\text{for some }z\in a_j\B^n
$$
also satisfies
$$
\Phi(a_j\B^n)\cap\partial(r_j\B^n)\ne\varnothing
\quad\Longrightarrow\quad
\Phi^{-1}(E_{t,j})\not\subset\Crit(\Phi).
$$
Set
$$
E_t=\bigcup_{j=1}^{\infty}E_{t,j}.
$$
Since $E_{t,j}\subset\partial(r_j\B^n)$ and $r_j\to\infty$, the set $E_t$ is
nonempty, closed, and discrete, and $E_t\subset\C^n\setminus t\B^n$.

Let $\Psi\colon t\B^n\to\C^n$ be holomorphic and suppose that
$$
\norm{\Psi(0)}<\frac t2,
\qquad
\abs{\Jac \Psi(\zeta)}\ge\frac1t
\quad\text{for some }\zeta\in\frac t2\B^n,
\qquad
\Psi^{-1}(E_t)\subset\Crit(\Psi).
$$
For example, one may take $\Psi$ to be the identity map.
We claim that
$$
\Psi\left(\frac t2\B^n\right)\subset t\B^n.
$$
The proof of \cite[Lemma~4.4]{RosayRudin1988} shows that the growth estimate
follows by downward induction once one has a starting inclusion and can pass
from the inclusion at level $k+1$ to the one at level $k$. Choose $J$ such that
$$
r_{J+1}>M\coloneqq
\max_{z\in\overline{(3t/4)\B^n}}\norm{\Psi(z)}.
$$
Since $a_{J+1}<3t/4$, this gives
$$
\Psi(a_{J+1}\B^n)\subset r_{J+1}\B^n.
$$
Fix $1\leq k\leq J$ and suppose that
$\Psi(a_{k+1}\B^n)\subset r_{k+1}\B^n$. The restriction
$\Psi|_{a_{k+1}\B^n}\colon a_{k+1}\B^n\to r_{k+1}\B^n$ is holomorphic and,
since $r_k\ge t$ and $a_k\ge t/2$, satisfies
$$
\norm{\Psi(0)}<\frac{r_k}{2},
\qquad
\abs{\Jac \Psi(\zeta)}\geq\frac1t,
\qquad
\zeta\in a_k\B^n.
$$
Our choice of $E_{t,k}$ therefore gives the stronger implication
$$
\Psi(a_k\B^n)\cap\partial(r_k\B^n)\ne\varnothing
\quad\Longrightarrow\quad
\Psi^{-1}(E_{t,k})\not\subset\Crit(\Psi).
$$
The inclusion $E_{t,k}\subset E_t$ and the condition
$\Psi^{-1}(E_t)\subset\Crit(\Psi)$ rule out
$\Psi^{-1}(E_{t,k})\not\subset\Crit(\Psi)$. Hence
$\Psi(a_k\B^n)$ avoids $\partial(r_k\B^n)$. Since this image is connected and
contains $\Psi(0)\in r_k\B^n$, it follows that
$$
\Psi(a_{k+1}\B^n)\subset r_{k+1}\B^n
\quad\Longrightarrow\quad
\Psi(a_k\B^n)\subset r_k\B^n.
$$
Starting with the inclusion at level $J+1$ and applying this implication for
$k=J,J-1,\ldots,1$ gives
$\Psi((t/2)\B^n)\subset t\B^n$, as claimed.

Set
$$
D=\bigcup_{t=1}^{\infty}E_t.
$$
For every $R>0$, the sets $E_t$ with $t\ge R$ do not meet $R\B^n$, while
each of the finitely many sets $E_t$ with $t<R$ meets $R\B^n$ in a finite
set. It follows that $D\cap R\B^n$ is finite, so $D$ is nonempty, closed, and
discrete.

Let $F\colon\C^n\to\C^n$ be nondegenerate. If
$F^{-1}(D)\subset\Crit(F)$, take $z_0\in\C^n$ such that
$\Jac F(z_0)\ne0$ and write $\eta=\abs{\Jac F(z_0)}>0$. Since $E_t\subset D$,
for every sufficiently large integer $t$,
$$
\norm{F(0)}<\frac t2,
\qquad
z_0\in\frac t2\B^n,
\qquad
\eta\ge\frac1t,
\qquad
F^{-1}(E_t)\subset\Crit(F).
$$
The scale-$t$ growth estimate with $\Psi=F|_{t\B^n}$ gives
$$
F\left(\frac t2\B^n\right)\subset t\B^n
\quad(t\gg1).
$$
The proof of \cite[Theorem~4.5]{RosayRudin1988} shows that the inclusions
$F((t/2)\B^n)\subset t\B^n$ for all sufficiently large positive integers $t$
imply that $F$ is affine. Write $F(z)=Az+b$. Nondegeneracy gives
$\det A\ne0$, so $F$ is surjective and $\Crit(F)=\varnothing$. Take $d\in D$.
Then
$$
F^{-1}(d)\ne\varnothing,
\qquad
F^{-1}(d)\cap\Crit(F)=\varnothing,
$$
contrary to $F^{-1}(D)\subset\Crit(F)$.
\end{proof}

\begin{proof}[Proof of \cref{thm:connected-blowup}]
Fix $n\ge3$, and choose $D\subset\C^n$ as in
\cref{bl:thm:regular-unavoidable}. The construction shows that $D$ meets every
compact subset of $\C^n$ in a finite set and is unbounded. Its distinct
points can therefore be enumerated as a sequence $\{a_j\}$ with
$\norm{a_j}\to\infty$. By
\cite[Proposition~2]{ForstnericGlobevnikRosay1996}, there is a proper
holomorphic embedding
$$
\gamma\colon\C\hookrightarrow\C^n
$$
whose image $A=\gamma(\C)$ contains the sequence $\{a_j\}$, hence contains $D$.
Properness makes $A$ closed; moreover, $A$ is connected, smooth, and
biholomorphic to $\C$. Its codimension is $n-1\ge2$.

Write
$$
\pi\colon Y=\Bl_A\C^n\longrightarrow\C^n
$$
for the blow-down map. If there is a
nondegenerate holomorphic map $f\colon\C^n\to Y$, choose $z_0\in\C^n$ where
$df_{z_0}$ is invertible. By the holomorphic inverse function theorem, there
is a neighborhood $U$ of $z_0$ such that $f|_U\colon U\to f(U)$ is
biholomorphic and $f(U)$ is open in $Y$.

Set $E=\pi^{-1}(A)$. By
\cite[Chapter~VII, p.~354]{DemaillyCADG}, the set $E$ is a smooth hypersurface
and $\pi\colon Y\setminus E\to\C^n\setminus A$ is biholomorphic; in particular,
$E$ has empty interior. Fix $y\in E$ and write
$s=\operatorname{codim}_{\C^n}A=n-1$. Choose local coordinates
$(z_1,\ldots,z_n)$ near $\pi(y)$ such that
$A=\{z_1=\cdots=z_s=0\}$, and a blow-up chart containing $y$, indexed by $j$,
in which $E=\{w_j=0\}$. In these coordinates,
\cite[Chapter~VII, formula~(12.2)]{DemaillyCADG} gives
$$
\pi(w_1,\ldots,w_n)
=\bigl(w_1w_j,\ldots,w_{j-1}w_j,w_j,w_{j+1}w_j,\ldots,w_sw_j,
w_{s+1},\ldots,w_n\bigr).
$$
At $y$, where $w_j=0$, this formula annihilates
$\partial/\partial w_k$ for $1\leq k\leq s$, $k\ne j$. The image of
$\partial/\partial w_j$ has $\partial/\partial z_j$-component $1$, while
$d\pi_y(\partial/\partial w_\ell)=\partial/\partial z_\ell$ for $\ell>s$.
These $n-s+1$ images are linearly independent, and hence
$$
\rank d\pi_y=n-s+1=\dim A+1=2.
$$
Since $f(U)$ is a nonempty open subset of $Y$ and $E$ has empty interior,
$f(U)\setminus E\ne\varnothing$. Choose $y_1\in f(U)\setminus E$ and let
$\zeta_1=(f|_U)^{-1}(y_1)$. The differentials $df_{\zeta_1}$ and $d\pi_{y_1}$ are
invertible, so
$$
F=\pi\circ f\colon\C^n\longrightarrow\C^n
$$
is nondegenerate. On the other hand, if $z\in F^{-1}(A)$, then $f(z)\in E$,
and the displayed rank formula gives
$$
\rank dF_z
\le \rank d\pi_{f(z)}
=2<n.
$$
Thus $F^{-1}(A)\subset\Crit(F)$ and, since $D\subset A$,
$F^{-1}(D)\subset\Crit(F)$. This contradicts
\cref{bl:thm:regular-unavoidable}. Therefore $Y$ is Brody volume hyperbolic
and, by \cite[p.~770]{Forstneric2013Survey}, is not Oka.
\end{proof}

\section{The algebraic basic Oka problem}\label{sec:abop-main}

It remains to prove \cref{thm:punctured-abop}. All cohomology groups in this
section are singular cohomology groups with rational coefficients unless
otherwise specified. We first construct a
continuous map with nonzero pullback and then compare Hodge types and weights
to show that no regular map can have the same pullback.

Consider the projective cubic
$E=\{[X:Y:Z]\in\Pj^2:Y^2Z=X^3-XZ^2\}$, denote its point at infinity by
$p=[0:1:0]$, and set $E^\circ=E\setminus\{p\}$.
On $Z=0$ the equation gives $X=0$, so $p$ is the unique point of $E$ at
infinity. For
$$
  \Phi(X,Y,Z)=Y^2Z-X^3+XZ^2
$$
one has
$$
  \Phi_X=-3X^2+Z^2,
  \qquad \Phi_Y=2YZ,
  \qquad \Phi_Z=Y^2+2XZ.
$$
On $Z=1$, a singular point would satisfy $y=0$ and $x^2=1/3$, which is
incompatible with $y^2=x^3-x$. Thus the affine curve is nonsingular, while
$\Phi_Z(p)=1$ shows that the unique point $p$ of $E$ at infinity is also
nonsingular. Hence $E$ is nonsingular. Moreover,
$x^3-x$ has the simple zeros $0,1,-1$ and is therefore not a square in
$\C(x)$. Thus $y^2-(x^3-x)$ is irreducible in $\C(x)[y]$, and hence in
$\C[x,y]$ by Gauss's lemma; its homogenization $\Phi$ is also irreducible.
Consequently, $E$ is irreducible, and
$$
  E^\circ=\{(x,y)\in\C^2:y^2=x^3-x\}.
$$
Fischer's genus formula \cite[p.~177]{Fischer2001} gives $g(E)=1$. Hence $E$
is homeomorphic to a torus. Lawson's construction
\cite[p.~209]{Lawson2003} provides a deformation retraction of $E^\circ$ onto
an embedded wedge $S^1\vee S^1$; denote its terminal retraction by
$h\colon E^\circ\to S^1\vee S^1$.

Fix an integer $n\geq2$, and set
$X_n=E^\circ\times(\Cstar)^{2n-2}$ and $Y_n=\C^n\setminus\{0\}$.
The curve $E^\circ$ is affine by its displayed equation, and
$\Cstar=\operatorname{Spec}\C[t,t^{-1}]$ is affine.
The Jacobian calculation above shows that $E^\circ$ is smooth, and each
factor $\Cstar$ is smooth. Their product $X_n$ is therefore a smooth
affine algebraic variety of complex dimension $1+(2n-2)=2n-1$. The
deformation retraction $h$ shows that $E^\circ$ is path connected, and every
$\Cstar$-factor is path connected as well. Hence $X_n$ is a smooth connected
affine algebraic variety and is an admissible source under both aBOP
conventions discussed in the Introduction. The target $Y_n$ is
a smooth connected algebraic variety, being the complement of one point in
$\A^n_\C$ with $n\geq2$.

\begin{lemma}\label{ab:lem:nonzero-pullback}
For the fixed integer $n\geq2$, there is a continuous map
$F_n\colon X_n\to Y_n$ whose pullback in degree $2n-1$ is nonzero.
\end{lemma}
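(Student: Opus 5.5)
The plan is to build $F_n$ as a composite of homotopy equivalences, collapse maps and a degree-one map onto a sphere, and then to compute its effect on $H^{2n-1}$ with the Künneth formula.

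\textbf{Step 1: reduce the source to a product of circles.} Let $\rho\colon\Cstar\to S^1$, $\rho(z)=z/\abs z$; it is a homotopy equivalence. Let $h\colon E^\circ\to S^1\vee S^1$ be the terminal map of Lawson's deformation retraction; it is also a homotopy equivalence. Let $c_1\colon S^1\vee S^1\to S^1$ be the map that is the identity on the first circle and collapses the second circle to the wedge point. Set
$$
  G=(c_1\circ h)\times\rho^{2n-2}\colon X_n\longrightarrow S^1\times(S^1)^{2n-2}=T^{2n-1}.
$$

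\textbf{Step 2: map the torus to the target sphere.} Give $T^{2n-1}$ its standard CW structure, which has a single top cell. Let $q\colon T^{2n-1}\to S^{2n-1}$ collapse the $(2n-2)$-skeleton to a point. This map has degree one. Define
$$
  F_n=\iota\circ q\circ G,
$$
where $\iota\colon S^{2n-1}\hookrightarrow Y_n$ is the inclusion of the unit sphere. The map $\iota$ is a homotopy equivalence, since $Y_n$ deformation retracts radially onto $S^{2n-1}$. Each factor of $F_n$ is continuous, so $F_n$ is continuous.

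\textbf{Step 3: compute the pullback.} Let $\sigma$ generate $H^{2n-1}(Y_n;\Q)\cong\Q$. The class $\iota^*\sigma$ generates $H^{2n-1}(S^{2n-1};\Q)$. Because $q$ has degree one, $q^*\iota^*\sigma$ is a nonzero multiple of the top class $\theta_0\times\theta_1\times\cdots\times\theta_{2n-2}$ of the torus, where $\theta_i$ is the pullback of the generator of $H^1(S^1;\Q)$ under the $i$-th projection. Applying $G^*$ and using naturality of cross products gives, up to a nonzero scalar,
$$
  F_n^*\sigma=\bigl((c_1h)^*\theta\bigr)\times\rho^*\theta\times\cdots\times\rho^*\theta .
$$
The class $c_1^*\theta\in H^1(S^1\vee S^1;\Q)\cong\Q^2$ is nonzero, because it restricts to the generator on the first circle. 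Since $h$ is a homotopy equivalence, $\alpha=(c_1h)^*\theta\ne0$ in $H^1(E^\circ;\Q)$. Each $\rho^*\theta$ generates $H^1(\Cstar;\Q)$, so $\rho^*\theta\times\cdots\times\rho^*\theta$ generates $H^{2n-2}((\Cstar)^{2n-2};\Q)\cong\Q$. By the rational Künneth theorem, the cross product
$$
  H^1(E^\circ;\Q)\otimes H^{2n-2}\bigl((\Cstar)^{2n-2};\Q\bigr)\longrightarrow H^{2n-1}(X_n;\Q)
$$
is injective. It sends the nonzero tensor $\alpha\otimes(\text{generator})$ to a nonzero class. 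Hence $F_n^*\ne0$ in degree $2n-1$.

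\textbf{Main obstacle.} The step that needs care is the degree-one statement for $q$, namely that $q^*$ carries the sphere's generator to the torus's top class. I would justify it via cellular cohomology: $q$ induces an isomorphism on the top cellular cochain groups, and both top cellular coboundaries vanish, so $q^*$ is an isomorphism in degree $2n-1$. This uses that the top cohomology of $T^{2n-1}$ is generated by $\theta_0\times\cdots\times\theta_{2n-2}$, which is the Künneth description of the torus.
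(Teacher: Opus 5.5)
Your proposal is correct and follows essentially the same route as the paper: the same map $X_n\to T^{2n-1}$ built from Lawson's retraction, the wedge collapse and $z\mapsto z/\abs{z}$, followed by a degree-one map $T^{2n-1}\to S^{2n-1}$ and the inclusion into $Y_n$, with a K\"unneth computation showing that the pulled-back top class is nonzero. The differences are minor. You construct the degree-one map explicitly by collapsing the $(2n-2)$-skeleton, correctly justified via cellular cohomology, whereas the paper takes it from the Hopf classification theorem. You also get $(c_1h)^*\theta\ne0$ from $h$ being a homotopy equivalence, whereas the paper uses the section $s$ of the first circle.
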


\begin{proof}
Let $\lambda\colon S^1\vee S^1\to S^1$ be the map that is the
identity on the first circle and collapses the second circle to the wedge
point. Denote by $s\colon S^1\to E^\circ$ the inclusion of the first circle and set
$\rho=\lambda\circ h$. Give $S^1$ its standard orientation, and let
$\alpha\in H^1(S^1;\Q)$ be the rational cohomology orientation class normalized by
$\langle\alpha,[S^1]\rangle=1$. The identity $\rho\circ s=\id_{S^1}$ gives
$$
s^*(\rho^*\alpha)=(\rho\circ s)^*\alpha=\alpha\ne0,
$$
so $\rho^*\alpha\ne0$.

For each $\Cstar$-factor, the maps
$R_t\colon\Cstar\to\Cstar$, $R_t(z)=((1-t)+t/|z|)z$, form a deformation
retraction onto $S^1$. Its terminal retraction is
$$
r\colon\Cstar\longrightarrow S^1,
\qquad r(z)=\frac{z}{|z|}
$$
at $t=1$. Thus
$r^*:H^1(S^1;\Q)\to H^1(\Cstar;\Q)$ is an isomorphism. The maps $\rho$
and $r$ give
$$
q_n\colon X_n\longrightarrow T^{2n-1}=(S^1)^{2n-1},
\qquad
q_n(x,z_1,\ldots,z_{2n-2})
=\bigl(\rho(x),r(z_1),\ldots,r(z_{2n-2})\bigr).
$$
To construct a degree-one map from $T^{2n-1}$ to $S^{2n-1}$, give the sphere
its standard orientation and the torus its product orientation. Let
$u_n\in H^{2n-1}(S^{2n-1};\Z)$ and
$\theta_n\in H^{2n-1}(T^{2n-1};\Z)$ be the resulting cohomology orientation
classes.
Write $\pi_0\colon X_n\to E^\circ$ and $\pi_j\colon X_n\to\Cstar$,
$1\leq j\leq2n-2$, for the factor projections. Viewing $\theta_n$ with
rational coefficients, the product orientation, the K\"unneth theorem
\cite[Theorem~3.15]{Hatcher2002}, and the nonvanishing of $\rho^*\alpha$ and
$r^*\alpha$ give
$$
q_n^*\theta_n
=\pi_0^*(\rho^*\alpha)\smile\pi_1^*(r^*\alpha)\smile\cdots\smile
\pi_{2n-2}^*(r^*\alpha)\ne0.
$$

The standard product CW structure makes $T^{2n-1}$ a
$(2n-1)$-dimensional CW complex. Since $2n-1\geq3$, the Hopf classification
theorem \cite[Chapter~V, Theorems~11.6 and~11.9]{Bredon1993} gives a
continuous map
$$
c_n\colon T^{2n-1}\longrightarrow S^{2n-1}
$$
such that $c_n^*u_n=\theta_n$; equivalently, $c_n$ has degree $1$.
Identify $S^{2n-1}$ with the unit sphere in $\C^n$, and denote its inclusion
into $Y_n$ by $\iota_n$. The radial deformation retraction
$z\mapsto z/\norm{z}$ makes $\iota_n$ a homotopy equivalence. After extending
$u_n$ to rational coefficients, choose
$\eta_n\in H^{2n-1}(Y_n;\Q)$ with $\iota_n^*\eta_n=u_n$. The composite
$$
F_n=\iota_n\circ c_n\circ q_n\colon X_n\longrightarrow Y_n
$$
satisfies
$$
F_n^*\eta_n=q_n^*c_n^*\iota_n^*\eta_n=q_n^*\theta_n\ne0.
$$
Thus $F_n^*\colon H^{2n-1}(Y_n;\Q)\to H^{2n-1}(X_n;\Q)$ is nonzero.
\end{proof}

To show that no regular map is homotopic to $F_n$, we compare the Hodge types
and weights of these two cohomology groups. For every smooth complex algebraic
variety $Z$, Deligne's canonical splitting has the form
$$
H^k(Z;\C)=\bigoplus_{p,q}I^{p,q}H^k(Z).
$$
The symbol $I^{p,q}H^k(Z)$ denotes the component of Hodge bidegree $(p,q)$;
such a component has Hodge type $(p,q)$ and weight $p+q$. The weight filtration
on the cohomology group is the increasing filtration given by
$$
W_\ell H^k(Z;\C)=\bigoplus_{p+q\leq\ell}I^{p,q}H^k(Z),
\qquad
W_\ell H^k(Z;\Q)=H^k(Z;\Q)\cap W_\ell H^k(Z;\C)
$$
\cite[Lemma~1.2.11]{Deligne1971}. Thus $W_\ell H^k(Z;\Q)$ consists precisely
of the rational cohomology classes whose complex Hodge components all have
weight at most $\ell$. For smooth $Z$, the possible weights in
$H^k(Z;\Q)$ range from $k$ to $2k$ \cite[Corollary~3.2.15]{Deligne1971}, so
$$
0=W_{k-1}H^k(Z;\Q)\subseteq W_kH^k(Z;\Q)\subseteq\cdots\subseteq
W_{2k}H^k(Z;\Q)=H^k(Z;\Q).
$$
The group $H^k(Z;\Q)$ is pure of weight $w$ when
$I^{p,q}H^k(Z)=0$ for every $p+q\ne w$; equivalently,
$W_{w-1}H^k(Z;\Q)=0$ and
$W_wH^k(Z;\Q)=H^k(Z;\Q)$. In this pure case one writes
$H^{p,q}(Z)=I^{p,q}H^k(Z)$, and
$$
H^k(Z;\C)=\bigoplus_{p+q=w}H^{p,q}(Z)
$$
is the usual Hodge decomposition; thus $H^{p,q}(Z)$ denotes its component of
Hodge type $(p,q)$. In particular, this is the standard notation for the
Hodge decomposition of a smooth projective variety.

Following the rational Tate convention associated with
\cite[Definition~2.1.13]{Deligne1971}, $\Q(-1)$ denotes the one-dimensional
rational Hodge structure whose complexification has Hodge type $(1,1)$ and
weight $2$. For a rational Hodge structure $V$,
its Tate twist is $V(-1)=V\otimes_\Q\Q(-1)$; it shifts every Hodge type
$(p,q)$ to $(p+1,q+1)$ and increases the weight by $2$. For $m\geq1$, write
$\Q(-m)=\Q(-1)^{\otimes m}$; twisting by $(-m)$ shifts each Hodge type
$(p,q)$ to $(p+m,q+m)$ and increases the weight by $2m$.

\begin{proposition}\label{ab:prop:weights}
For the fixed integer $n\geq2$, the group $H^{2n-1}(Y_n;\Q)$ is
one-dimensional of Hodge type $(n,n)$ and is pure of weight $2n$.
Moreover, $H^{2n-1}(X_n;\C)$ is the direct sum of two one-dimensional Hodge
components of types
$(2n-1,2n-2)$ and $(2n-2,2n-1)$, and $H^{2n-1}(X_n;\Q)$ is pure of
weight $4n-3$.
\end{proposition}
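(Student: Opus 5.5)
The plan is to compute the mixed Hodge structures from a few basic building blocks, using functoriality and the multiplicativity of Deligne's mixed Hodge structures under products. The Künneth isomorphism is an isomorphism of mixed Hodge structures, by the compatibility of cup product with the canonical splitting in \cite{Deligne1971}.

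\emph{The target $Y_n$.} Consider the Gysin/localization sequence of the pair $(\C^n,\C^n\setminus\{0\})$. Since $\C^n$ is contractible, the connecting map gives an isomorphism $H^{2n-1}(Y_n;\Q)\cong H^{2n}_{\{0\}}(\C^n;\Q)$. By the Thom isomorphism, the latter is $H^0(\mathrm{pt};\Q)(-n)=\Q(-n)$, and both maps are morphisms of mixed Hodge structures. Hence $H^{2n-1}(Y_n;\Q)$ is one-dimensional, of type $(n,n)$, and pure of weight $2n$.

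An alternative route avoids the Thom isomorphism. Compare with $\Pj^{n-1}$ via the $\Cstar$-bundle $Y_n\to\Pj^{n-1}$. Alternatively, use the Mayer--Vietoris sequence and induction: $Y_n=(\C^{n-1}\setminus 0)\times\C\cup \C^{n-1}\times\Cstar$, with intersection $(\C^{n-1}\setminus 0)\times\Cstar$. The connecting map is a morphism of mixed Hodge structures, and Künneth gives $H^{2n-3}(Y_{n-1})\otimes H^1(\Cstar)\cong\Q(-(n-1))\otimes\Q(-1)=\Q(-n)$. The base case is $H^1(\Cstar;\Q)=\Q(-1)$, which holds because $d\log z$ spans it and the compactification $\Pj^1$ has $H^1=0$, so the class must have weight $2$.

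\emph{The source $X_n$.} First, $H^1(E^\circ;\Q)\cong H^1(E;\Q)$. Indeed, removing a point from a compact curve does not change $H^1$: the sequence $0\to H^1(E)\to H^1(E^\circ)\to H^0(p)(-1)\to H^2(E)\to 0$ has an isomorphism on its last arrow. So $H^1(E^\circ)$ is pure of weight $1$, with Hodge types $(1,0)$ and $(0,1)$, each one-dimensional since $g(E)=1$. Also $H^0(E^\circ)=\Q$ and $H^2(E^\circ)=0$, since $E^\circ$ retracts to a wedge of circles. Each factor $\Cstar$ has $H^0=\Q$ and $H^1=\Q(-1)$.

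Künneth then gives $H^{2n-1}(X_n)=\bigoplus H^{a}(E^\circ)\otimes\bigotimes_j H^{b_j}(\Cstar)$ with $a+\sum b_j=2n-1$, where $a\le1$ and $b_j\le1$. There are $2n-2$ factors of $\Cstar$, so the total degree $2n-1$ forces $a=1$ and all $b_j=1$. Hence
\[
H^{2n-1}(X_n;\Q)\cong H^1(E;\Q)\otimes\Q(-(2n-2)).
\]
Its complexification has types $(1,0)+(2n-2,2n-2)=(2n-1,2n-2)$ and $(0,1)+(2n-2,2n-2)=(2n-2,2n-1)$, each one-dimensional. The structure is pure of weight $1+2(2n-2)=4n-3$.

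The main obstacle is justifying that the Künneth and localization isomorphisms are compatible with Deligne's mixed Hodge structures and with Tate twists. I would cite \cite{Deligne1971} for the functoriality of the mixed Hodge structure, for products (cup product of external classes), and for the Gysin sequence, rather than reprove these facts. The identification $H^1(\Cstar)=\Q(-1)$ is then deduced from the constraint that weights on $H^1$ of a smooth variety lie in $[1,2]$, combined with $H^1(\Pj^1)=0$.
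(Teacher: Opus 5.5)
Your proposal is correct and takes essentially the same route as the paper. The Gysin (localization plus Thom isomorphism) sequence for $\{0\}\subset\C^n$ gives $H^{2n-1}(Y_n;\Q)\cong\Q(-n)$. The point-removal sequence for $E^\circ=E\setminus\{p\}$, the identification $H^1(\Cstar;\Q)\cong\Q(-1)$, and the K\"unneth isomorphism of mixed Hodge structures with your degree count then give $H^{2n-1}(X_n;\Q)\cong H^1(E;\Q)\otimes\Q(-(2n-2))$.

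The differences are only in bookkeeping. You derive $H^1(E^\circ)\cong H^1(E)$ and $H^1(\Cstar)\cong\Q(-1)$ by hand, where the paper cites \cite[Example~2.136]{BurgosFresan2020}. For the latter, the precise fact behind your weight remark is $W_1H^1(\Cstar)=\operatorname{Im}\bigl(H^1(\Pj^1)\to H^1(\Cstar)\bigr)=0$. The paper also takes the K\"unneth compatibility from \cite[Proposition~8.2.10]{Deligne1974} rather than from Hodge~II.
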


\begin{proof}
The radial deformation retraction of $Y_n$ onto $S^{2n-1}$ gives
$$
H^j(Y_n;\Z)\cong
\begin{cases}
\Z, & j=0,2n-1,\\
0, & \text{otherwise}.
\end{cases}
$$
Since these integral cohomology groups are free, the universal coefficient
theorem \cite[Theorem~3.2]{Hatcher2002} gives
$$
H^j(Y_n;\Q)\cong H^j(Y_n;\Z)\otimes_\Z\Q\cong
\begin{cases}
\Q, & j=0,2n-1,\\
0, & \text{otherwise}.
\end{cases}
$$
The Gysin exact sequence for the smooth closed subvariety
$\{0\}\subset\A^n_\C$ of codimension $n$
\cite[Proposition~2.134]{BurgosFresan2020} contains the exact sequence of
mixed Hodge structures
$$
0=H^{2n-1}(\A^n_\C;\Q)\longrightarrow H^{2n-1}(Y_n;\Q)
\longrightarrow H^0(\{0\};\Q)(-n)
\longrightarrow H^{2n}(\A^n_\C;\Q)=0.
$$
Consequently, there is an isomorphism of mixed Hodge structures
$H^{2n-1}(Y_n;\Q)\cong\Q(-n)$. Thus the entire group $H^{2n-1}(Y_n;\Q)$
has Hodge type $(n,n)$ and is pure of weight $2n$.

Next compute the Hodge types and weights contributed by the factors of $X_n$.
The deformation retraction $h$ cited above gives
$H^j(E^\circ;\Q)=0$ for $j>1$. Applied to $E^\circ=E\setminus\{p\}$,
\cite[Example~2.136]{BurgosFresan2020} gives an isomorphism of mixed Hodge structures
$H^1(E^\circ;\Q)\cong H^1(E;\Q)$.
By \cite[Example~1.5.17]{CattaniEtAl2014},
$\dim_\C H^{1,0}(E)=\dim_\C H^{0,1}(E)=g(E)=1$. The isomorphism above shows that
$H^1(E^\circ;\C)$ has two one-dimensional Hodge components of types
$(1,0)$ and $(0,1)$, and $H^1(E^\circ;\Q)$ is pure of weight $1$.

The radial deformation retraction of $\Cstar$ onto $S^1$ gives
$$
H^j(\Cstar;\Q)\cong
\begin{cases}
\Q, & j=0,1,\\
0, & j>1.
\end{cases}
$$
Applied to $\Cstar=\Pj^1\setminus\{0,\infty\}$,
\cite[Example~2.136]{BurgosFresan2020} gives an isomorphism of mixed Hodge
structures $H^1(\Cstar;\Q)\cong\Q(-1)$.
Hence this group has Hodge type $(1,1)$ and is pure of weight $2$.

Since each of the $2n-1$ factors of $X_n$ has no cohomology above degree $1$,
Deligne's K\"unneth theorem \cite[Proposition~8.2.10]{Deligne1974} directly
gives the following isomorphism of mixed Hodge structures:
$$
  H^{2n-1}(X_n;\Q)
  \cong
  H^1(E^\circ;\Q)
  \otimes H^1(\Cstar;\Q)^{\otimes(2n-2)}.
$$
Under this isomorphism, Hodge bidegrees add across the tensor factors, so the
two summands have types
$$
(1,0)+(2n-2)(1,1)=(2n-1,2n-2),
\qquad
(0,1)+(2n-2)(1,1)=(2n-2,2n-1).
$$
Both summands are one-dimensional, so the entire group
$H^{2n-1}(X_n;\Q)$ is pure of weight $4n-3$. Since $2n<4n-3$,
$$
W_{2n}H^{2n-1}(X_n;\Q)=0.
$$
\end{proof}

\begin{proof}[Proof of Theorem~\ref{thm:punctured-abop}]
Fix $n\geq2$. The proof of Proposition~\ref{ab:prop:weights} gives
$$
W_{2n}H^{2n-1}(Y_n;\Q)=H^{2n-1}(Y_n;\Q),
\qquad
W_{2n}H^{2n-1}(X_n;\Q)=0.
$$

By \cite[Theorem~3.2.5(iii)]{Deligne1971}, every regular map
$g\colon X_n\to Y_n$ satisfies
$$
  g^*\bigl(H^{2n-1}(Y_n;\Q)\bigr)
  =g^*\bigl(W_{2n}H^{2n-1}(Y_n;\Q)\bigr)
  \subseteq W_{2n}H^{2n-1}(X_n;\Q)
  =0.
$$
Thus every regular map $X_n\to Y_n$ induces the zero map in degree
$2n-1$.

By \cref{ab:lem:nonzero-pullback}, there is a continuous map
$F_n\colon X_n\to Y_n$ with $F_n^*\neq0$. If $Y_n$ satisfied aBOP, then,
since $X_n$ is a smooth connected affine algebraic variety, the definition
of aBOP would provide a regular map $g\colon X_n\to Y_n$ homotopic to $F_n$.
Homotopy invariance of singular cohomology would then give
$$
  F_n^*=g^*:H^{2n-1}(Y_n;\Q)
  \longrightarrow H^{2n-1}(X_n;\Q),
$$
which is impossible because $F_n^*\neq0$ and $g^*=0$. Therefore
$Y_n$ does not satisfy aBOP. Since $n\geq2$ is arbitrary, the theorem
follows.
\end{proof}

\section*{Acknowledgments}

The author thanks his advisor, Professor Song--Yan Xie, for his guidance and
support.

Generative artificial intelligence tools were used in the preparation and
revision of this manuscript. The author verified all details and takes full
responsibility for the content.

\end{document}